\newtheorem{theorem}{Theorem}[section]
\newtheorem{lemma}[theorem]{Lemma}
\newtheorem{corollary}[theorem]{Corollary}
\newtheorem{proposition}[theorem]{Proposition}
\newtheorem{Conjecture}[theorem]{Conjecture}
\theoremstyle{definition}
\newtheorem{remark}[theorem]{Remark}
\newcommand{\dd}{\mathrm{d}}
\newcommand{\R}{{\mathord{\mathbb R}}}
\newcommand{\N}{{\mathord{\mathbb N}}}
\newcommand{\dom}[1]{\mathrm{Dom}(#1)}
\newcommand{\ie}{\emph{i.e.}}
\newcommand{\cf}{\emph{cf.}}
\newcommand{\sii}{L^2}
\newcommand{\eps}{\varepsilon}
\newcommand{\diag}{\mathop{\mathrm{diag}}\nolimits}
\newcommand{\domain}{\Pi}
\title{Location of hot spots in thin curved strips}
\author{David Krej\v{c}i\v{r}\'{i}k}
\address{Department of Mathematics\\ Faculty of Nuclear Sciences and Physical Engineering \\ Czech Technical University in Prague\\ Trojanova 13\\ 120 00 Prague\\ Czechia}
\email{david.krejcirik@fjfi.cvut.cz}
\author{Mat\v{e}j Tu\v{s}ek}
\address{Department of Mathematics\\ Faculty of Nuclear Sciences and Physical Engineering \\ Czech Technical University in Prague\\ Trojanova 13\\ 120 00 Prague\\ Czechia}
\email{matej.tusek@fjfi.cvut.cz}
\date{August 29, 2018 (Prague)}
\begin{document}

\begin{abstract}
The maxima and minima of Neumann eigenfunctions 
of thin tubular neighbourhoods of curves on surfaces
are located in terms of the maxima and minima of Neumann eigenfunctions 
of the underlying curves.
In particular, the hot spots conjecture 
for a new large class of domains 
(possibly non-convex and non-Euclidean)
is proved.
\end{abstract}

\maketitle

\section{Introduction}

\subsection{Motivation I: the hot spots conjecture}
Consider the Neumann eigenvalue problem for the Laplacian
in a bounded domain~$\Omega$ of
a two-dimensional oriented Riemannian manifold:
\begin{equation}
\left\{
\begin{aligned}
  -\Delta u &= \lambda u 
  && \mbox{in} \ \Omega \,,
  \\
  \frac{\partial u}{\partial \nu} &= 0
  && \mbox{on} \ \partial\Omega \,,
\end{aligned}
\right.
\end{equation}
where~$\nu$ denotes the outward unit normal vector field 
on the boundary~$\partial\Omega$.  
Assuming that~$\Omega$ is sufficiently regular,
it is well known that the eigenvalues can be arranged
in an increasing sequence $\{\lambda_n\}_{n=1}^\infty$, 
where each eigenvalue is repeated according to its multiplicity,
so that
$$
  0=\lambda_1 < \lambda_2 \leq \lambda_3 \leq \dots \to \infty
  \,.
$$
The corresponding eigenfunctions~$\{u_n\}_{n=0}^\infty$ 
can be chosen in such a way that they are real-valued
and form a complete orthonormal set in $\sii(\Omega)$.

Since the first eigenfunction~$u_1$ is constant in~$\Omega$,
all the other eigenfunctions must change sign. Therefore, their global maxima (respectively, minima) in $\overline{\Omega}$ are always positive (respectively, negative).
Note that the roles of maxima and minima are interchanged 
after multiplying an eigenfunction by minus one 
and that higher eigenvalues may be degenerate.
One of the most challenging open problems in spectral geometry 
is about the location of the maxima and minima 
of the \emph{second} eigenfunction.
The celebrated conjecture of Rauch's from 1974
states that any eigenfunction corresponding to~$\lambda_2$ 
attains its maximum and minimum at boundary points only:

\begin{Conjecture}[Hot spots conjecture]
\label{Conj.Rauch}
$
\displaystyle
  \forall x \in \Omega, \quad
  \min_{\partial\Omega} u_2 < u_2(x) < \max_{\partial\Omega} u_2
$.
\end{Conjecture}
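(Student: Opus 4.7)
The plan is to establish Conjecture~\ref{Conj.Rauch} in the restricted setting announced in the abstract, where $\Omega = \Omega_\eps$ is a thin tubular neighbourhood of a smooth curve $\Gamma$ of length $L$ on a Riemannian surface, with transversal half-width $\eps > 0$ taken sufficiently small. First I would introduce Fermi coordinates $(s,t)$ adapted to $\Gamma$, with $s \in (0,L)$ the arc-length parameter and $t \in (-\eps,\eps)$ the signed geodesic distance to $\Gamma$. The induced metric takes a warped-product form, and the Neumann Laplacian on $\Omega_\eps$ becomes an explicit weighted operator on the rectangle $(0,L) \times (-\eps,\eps)$ with Neumann boundary conditions on all four sides; the weight $h(s,t)$ encodes the curvature of $\Gamma$ and the Gauss curvature of the ambient surface.

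Next I would perform a thin-tube asymptotic analysis. After a unitary transformation flattening the weight and a rescaling $t = \eps \tau$, the rescaled operator splits schematically as $\eps^{-2} T_\tau + L_s + O(\eps)$, where $T_\tau$ is the transverse Neumann Laplacian on $(-1,1)$ (whose lowest mode is constant with eigenvalue zero) and $L_s$ is an effective one-dimensional Neumann Sturm--Liouville operator on $(0,L)$ incorporating a curvature-induced potential. Norm-resolvent convergence, together with min-max, then yields $\lambda_n(\Omega_\eps) \to \mu_n$ for each fixed $n$, and the rescaled second eigenfunction $u_2^\eps(s,\tau)$ converges in $H^1$ to $v_2(s)$, the second Neumann eigenfunction of $L_s$.

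The final step is the transfer of the extremum location. A one-dimensional Sturmian argument shows that $v_2$ is strictly monotone on $(0,L)$, so its maximum and minimum are attained only at the endpoints $s=0$ and $s=L$, which correspond precisely to the two ``short'' arcs of $\partial\Omega_\eps$. To conclude the same for $u_2^\eps$ itself, I would upgrade the $H^1$-convergence to $C^1$-convergence via elliptic regularity (Schauder estimates combined with a Neumann-boundary flattening). On any compact subinterval $[\delta, L-\delta]$, $\partial_s u_2^\eps$ then stays uniformly close to $v_2'(s)$, which is bounded away from zero; combined with the Neumann condition $\partial_\tau u_2^\eps = 0$ on the long sides, this excludes interior critical points of $u_2^\eps$ in that region.

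The main obstacle is the behaviour near the endcaps $s \in [0,\delta] \cup [L-\delta,L]$, where $v_2'$ vanishes and $v_2$ itself attains its extrema; here the $C^1$ approximation by $v_2$ is too weak to exclude interior critical points on its own. I would expect to resolve this either by an explicit boundary-layer expansion matching the interior approximation to the true eigenfunction near each endcap, or, more elegantly, by a maximum-principle argument that exploits the definite sign of $u_2^\eps$ in these small regions to force its extrema onto the adjacent short arcs of $\partial\Omega_\eps$. Making the latter argument quantitative uniformly in $\eps$, and handling a possibly non-flat ambient metric, is where I anticipate the bulk of the technical work to lie.
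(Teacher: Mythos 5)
Your outline correctly reproduces the paper's architecture: Fermi coordinates, a rescaling making $\Omega_\eps$ isometric to the fixed rectangle $(0,L)\times(-1,1)$, norm-resolvent convergence to a decoupled comparison operator, upgrading to uniform convergence of eigenfunctions by elliptic-regularity bootstrapping and Sobolev embedding, and finally a transfer of extrema. You also correctly flag the two delicate points — monotonicity of the limiting profile and the behaviour near the two short ends. However, there are two concrete issues, one an error and one a genuine gap.

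First, the effective one-dimensional operator you describe is wrong. You say the limit is ``an effective one-dimensional Neumann Sturm--Liouville operator on $(0,L)$ incorporating a curvature-induced potential.'' That is the Dirichlet picture (where the transverse ground state decays at the walls and produces a $-\kappa^2/4$ term); for Neumann boundary conditions the transverse ground state is constant and no curvature potential appears. The paper's comparison operator is simply $H_0 = -\partial_s^2 - \eps^{-2}\partial_t^2$ with Neumann conditions, whose longitudinal part is the free Neumann Laplacian on $(0,L)$ with eigenfunctions $\cos\big((n-1)\pi s/L\big)$. This matters for your next step: a ``one-dimensional Sturmian argument'' does not in general give strict monotonicity of the second Neumann eigenfunction of $-u''+Vu=\lambda u$ when $V$ is nontrivial (one zero, yes; monotonicity, no). Your chain of reasoning happens to survive only because the potential you introduced is in fact absent, so $v_2(s)=\cos(\pi s/L)$ is explicitly monotone.

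Second, and more importantly, your treatment of the endcap regions — which you correctly identify as the crux — is not a proof. Your maximum-principle suggestion as stated does not close the gap: in a small neighbourhood of $s=0$ one has $u_2^\eps>0$, hence $-\Delta u_2^\eps = \lambda_2^\eps u_2^\eps > 0$, so $u_2^\eps$ is superharmonic; this excludes interior \emph{minima} of $u_2^\eps$ in that region but says nothing about interior \emph{maxima}, which is exactly what you need to rule out there. (The paper does use precisely this superharmonicity/maximum-principle observation, but only for the interior extremum regions $S_m^{(n)}$, $1\le m\le n-2$ — not for the endcaps.) For the endcaps the paper establishes an $\eps$-uniform $C^2(\overline\domain)$ (in fact $C^{2,\gamma}$) estimate — first a $W^{4,2}$ bound by differentiating the eigenvalue equation twice and testing, then Sobolev embedding, then an interpolation lemma (Lemma~\ref{lem:ref}) converting $L^2$-smallness plus a H\"older bound into $C^2$-smallness. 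This yields $\partial_s^2\psi_n^\eps<0$ uniformly in $S_0^{(n)}(\delta)$, and then a one-line contradiction: if $(s_0,t_0)$ were a critical point, $\int_0^{s_0}\partial_s^2\psi_n^\eps(s,t_0)\,\dd s = \partial_s\psi_n^\eps(s_0,t_0)-\partial_s\psi_n^\eps(0,t_0) = 0-0 = 0$ by the Neumann condition on $\{s=0\}$, yet the integrand is strictly negative. Your alternative of a boundary-layer expansion might also work, but it is a substantially different and heavier route, and you do not give enough detail to assess it; as written, the endcap step is missing its key lemma.
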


According to Ba\~nuelos and Burdzy~\cite{Banuelos-Burdzy_1999}, 
Conjecture~\ref{Conj.Rauch} was indeed raised by Rauch 
during a conference in 1974 but 
``it has never appeared in print under his name''.
In fact, the conjecture is usually stated for Euclidean domains,
\ie\ $\Omega\subset\R^2$, but the present extension 
to manifolds is obvious. 
It is certainly true that Conjecture~\ref{Conj.Rauch}
does not appear in Rauch's conference report~\cite{Rauch_1975},
where however a strong heuristic support 
for the validity of it can be deduced from: 
Since the first eigenvalue is zero, the large-time behaviour
of the heat semigroup generated by the Neumann Laplacian
is determined by the second eigenfunction 
and Neumann boundary conditions model an insulating interface,
so it is expected that ``hot spots'' and ``cold spots'' 
of a medium living inside~$\Omega$ will move towards 
the boundary~$\partial\Omega$ for large times.

Conjecture~\ref{Conj.Rauch} 
has been proved for various classes of domains, 
typically requiring special forms 
\cite[Sec.~II.5]{Kawohl_1985},
certain symmetry 
\cite{Banuelos-Burdzy_1999,Jerison-Nadirashvili_2000,Pascu_2002}
or smallness of a parameter 
\cite{Atar-Burdzy_2004,Miyamoto_2009,Miyamoto_2013,Siudeja_2015}.
The most general positive result obtained so far 
was given by Atar and Burdzy in 2004 \cite{Atar-Burdzy_2004},
who showed that the conjecture holds for all Euclidean Lipschitz domains 
delimited by graphs of two Lipschitz functions 
with Lipschitz constant equal to one. 

On the negative side,
there exist counterexamples for multiply connected Euclidean domains
\cite{Burdzy-Werner_1999,Bass-Burdzy_2000,Burdzy_2005}
and domains on manifolds \cite{Freitas_2002}.
The current state of the art is that the conjecture is expected
to hold for convex or even simply connected Euclidean domains
\cite[Conj.~1.2]{Burdzy_2005}.

The primary objective of this paper is to provide 
a new large class of domains
-- non-convex and possibly on manifolds --
for which the hot spots conjecture holds.
These domains are \emph{strips}, that is, 
tubular neighbourhoods of curves 
on an arbitrary two-dimensional Riemannian manifold.
We prove the conjecture under the additional assumption
that the radius of the tubular neighbourhood is small,
so our domains are generally covered neither by
the class considered in~\cite{Atar-Burdzy_2004}
nor the other works quoted above.
Furthermore, the technique we use enables us to locate 
the maxima and minima of \emph{all} the Neumann eigenfunctions 
in the thin strips.

\subsection{Strips on surfaces: location of hot spots}
To precisely state our results in the context of the hot spots conjecture,
we need to begin with geometric preliminaries.
Let~$\mathcal{A}$ be a two-dimensional oriented Riemannian manifold 
(not necessarily embedded in~$\R^3$)
and denote by~$K$ its Gauss curvature.
Since our results are established under regularity assumptions
which are not expected to be optimal, let us assume from 
the beginning that~$\mathcal{A}$ is smooth, \ie~$C^\infty$.
Given a positive number~$L$,
let $\Gamma:[0,L]\to\mathcal{A}$ be a smooth embedded curve,
which is assumed to be parametrised by its arc-length.
Denoting by $T:=\Gamma'$ the unit tangent vector field of~$\Gamma$,
we introduce a unit normal vector field~$N$ of~$\Gamma$
by the requirements that~$N$ is smooth, of norm~$1$, orthogonal to~$T$
and that the couple $\{T(s),N(s)\}$ 
is positively oriented for all $s \in [0,L]$. 
The curvature~$\kappa$ of~$\Gamma$ is then defined 
by the Frenet formula $T'=\kappa N$
(it is the \emph{geodesic} curvature of~$\Gamma$
if~$\mathcal{A}$ is embedded in~$\R^3$).

Given a positive number~$\eps$,
we define a \emph{strip}~$\Omega_\eps$ \emph{of width}~$2\eps$
to be the set of all the points~$x$ in~$\mathcal{A}$
for which there exists a geodesic of length less than~$\eps$ 
from~$x$ meeting~$\Gamma$ orthogonally. In other words,
\begin{equation}\label{strip}
  \Omega_\eps := \mathscr{L}_\eps(\domain)
  \quad \mbox{with} \quad
  \mathscr{L}_\eps(s,t) := \exp_{\Gamma(s)}\big(\eps \, t\, N(s)\big) \,,
  \quad 
  \domain := (0,L) \times (-1,1)
  \,,
\end{equation}
where~$\exp_x$ is the exponential map of~$\mathcal{A}$ 
at $x \in \mathcal{A}$.
Note that $s \mapsto \mathscr{L}_\eps(s,t)$
traces the curves parallel to~$\Gamma$
at a fixed distance~$\eps |t|$,
while the curve $t \mapsto \mathscr{L}_\eps(s,t)$
is a geodesic orthogonal to~$\Gamma$
for any fixed~$s$.
We also note that in the Euclidean case, 
\ie~$\mathcal{A}=\R^2$,
we have $K=0$ and
$
  \mathscr{L}_\eps(s,t)
  = \Gamma(s) + N(s) \, \eps \, t  
$.

Let $\{\lambda_n^\eps\}_{n=1}^\infty$ denote 
the increasing sequence of eigenvalues
of the Neumann Laplacian in the strip~$\Omega_\eps$, 
where each eigenvalue is repeated according to its multiplicity,
and let $\{u_n^\eps\}_{n=1}^\infty$ denote the set of corresponding
real-valued eigenfunctions.
At the same time, 
let $\{\lambda_n^0\}_{n=1}^\infty$ denote the increasing sequence of eigenvalues
of the Neumann Laplacian on the interval~$(0,L)$, 
where each eigenvalue is repeated according to its multiplicity,
and let $\{u_n^0\}_{n=1}^\infty$ denote the set of corresponding
real-valued eigenfunctions.
Since the spectral problem for the latter 
can be solved explicitly in terms of sines and cosines,
it is well known that all the eigenvalues~$\lambda_n^0$ are simple
and that each eigenfunction~$u_n^0$ with $n\geq 2$ 
admits~$n$ stationary points 
\begin{equation} \label{eq:stac_points}
  \Big\{
  s_{m}^{(n)} := \frac{m}{n-1} \, L
  \Big\}_{m=0}^{n-1} \subset [0,L]
  \,,
\end{equation}
which correspond to, say, 
$\lfloor n/2 \rfloor$ minima and $\lceil n/2 \rceil$ maxima.
More specifically,
the eigenfunctions can be chosen in such a way that $s_m^{(n)}$
with $m$~even (respectively, $m$~odd) correspond to
maxima (respectively, minima).
The main result	 of this paper says that 
the extremal points of the eigenfunctions of
the Neumann Laplacian in~$\Omega_\eps$ 
are located near the extremal points of the eigenfunctions of
the Neumann Laplacian 
on the curve~$\Gamma$ 
(which can be identified with the interval $[0,L]$). 
For a concise presentation of the result and also with future purposes in mind, 
we firstly introduce the following subsets of~$\overline{\domain}$:
$$
  S_m^{(n)}(\delta)
  :=\big((s_m^{(n)}-\delta,s_m^{(n)}+\delta)
  \cap(0,L)\big)\times(-1,1)
$$ 
for all $m\in\{0,1,\ldots,n-1\}$ and  
\begin{align*}
  \tilde{S}_0^{(n)}
  &:=\{0\}\times[-1,1],
  \\
  \tilde{S}_m^{(n)}(\delta)
  &:=\overline{S_m^{(n)}(\delta)} 
  \quad\text{ for }\; m\in\{1,2,\ldots,n-2\} ,
  \\
  \tilde{S}_{n-1}^{(n)}
  &:=\{L\}\times[-1,1] ,
\end{align*}
where $\delta>0$. 
To make the notation more flexible, 
we will sometimes write  $\tilde{S}_m^{n}$ instead of $\tilde{S}_m^{n}(\delta)$ 
even in the case $m\in\{1,2,\ldots,n-2\}$.
Then our main result can be stated as follows:
\begin{theorem}\label{Thm.location}
For any  $N \in \{2,3,\ldots\}$ 
and $\delta>0$, 
there exists a positive constant~$\eps_0$ 
depending on~$\delta$, $N$, $\Gamma$ and~$\mathcal{A}$ such that, 
for all $\eps < \eps_0$ and any $n \in \{2,\dots,N\}$,
the eigenvalue~$\lambda_n^\eps$ is simple 
and the corresponding eigenfunction~$u_n^\eps$ 
can be chosen in such a way that
\begin{align}
  \big\{ x \in \overline{\Omega}_\eps : \
  \max_{\overline{\Omega}_\eps}u_n^\eps=u_n^\eps(x) 
  \big\}   
  &\subset  \bigcup_{\substack{m=0\\ m\text{ even}}}^{n-1}
  \mathscr{L}_\eps\big(\tilde{S}_m^{(n)}\big) ,	
  \label{eq:location_max}
  \\
  \big\{ x \in \overline{\Omega}_\eps : \ 
  \min_{\overline{\Omega}_\eps}u_n^\eps=u_n^\eps(x) 
  \big\}   
  &\subset  \bigcup_{\substack{m=0\\ m\text{ odd}}}^{n-1}
  \mathscr{L}_\eps\big(\tilde{S}_m^{(n)}\big) .
  \label{eq:location_min}
\end{align}
Moreover, $u_n^\eps$ has no stationary points in~$\Omega_\eps$ 
outside the sets on the right-hand sides of~\eqref{eq:location_max} 
and~\eqref{eq:location_min}; 
and in $\mathscr{L}_\eps\big(S_m^{(n)}(\delta)\big)$,
$m\in\{1,2,\ldots,n-2\}$, $u_n^\eps$ will not attain  
local minima (respectively, maxima) if $m$ is even (respectively, odd).
\end{theorem}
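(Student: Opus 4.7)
\emph{Proof plan.}
The strategy is to pull the whole problem back to the fixed reference rectangle $\domain=(0,L)\times(-1,1)$ via the map $\mathscr{L}_\eps$ and then compare the resulting $\eps$-dependent Neumann eigenvalue problem to the decoupled one-dimensional problem on $(0,L)$. Indeed, since $\mathscr{L}_\eps$ is a diffeomorphism of $\overline{\domain}$ onto $\overline{\Omega}_\eps$ for all sufficiently small $\eps$ (this is where the geometric hypotheses on $\Gamma$ and $\mathcal{A}$ enter, controlling focal points via the Jacobi equation), the pulled-back metric has the explicit form
\begin{equation*}
g_\eps = \mathrm{diag}\big(f_\eps(s,t)^2,\eps^2\big),
\qquad
f_\eps(s,t) = 1-\eps t \kappa(s)+O(\eps^2),
\end{equation*}
where the $O(\eps^2)$ remainder and its derivatives are uniformly controlled by $\kappa$, $K$ and their derivatives. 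After the natural unitary rescaling that trivialises the Jacobian, the Neumann Laplacian in $\Omega_\eps$ becomes, in $\sii(\domain)$, a self-adjoint operator $H_\eps$ which formally expands as
\begin{equation*}
H_\eps = -\eps^{-2}\partial_t^2 - \partial_s^2 + V_0(s,t) + \eps\, R_\eps,
\end{equation*}
where $V_0$ gathers the curvature-induced lower-order terms and $R_\eps$ is a second-order operator bounded uniformly in $\eps$. The transverse part $-\eps^{-2}\partial_t^2$ with Neumann conditions has ground state $1/\sqrt{2}$ and spectral gap of order $\eps^{-2}$, so that for each fixed $N$ the first $N$ eigenvalues of $H_\eps$ stay bounded and the associated eigenfunctions concentrate on the $t$-independent subspace as $\eps\to 0$.

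Combining this separation of scales with a standard min-max/norm-resolvent-convergence argument, I would establish that $\lambda_n^\eps\to\lambda_n^0$ for every fixed $n$, which in particular forces simplicity of $\lambda_n^\eps$ for small $\eps$ (because $\lambda_n^0$ are all simple), and that, after appropriate sign normalisation, $u_n^\eps\to u_n^0$ in $\sii(\domain)$. The crucial upgrade is to lift this $\sii$-convergence to $C^2(\overline{\domain})$-convergence. This is done by a bootstrap using uniform elliptic regularity for $H_\eps$: the eigenvalue equation $H_\eps u_n^\eps=\lambda_n^\eps u_n^\eps$ with Neumann data and smooth coefficients yields, by Schauder estimates applied on $\overline{\domain}$, bounds on $\|u_n^\eps\|_{C^{2,\alpha}(\overline{\domain})}$ that are uniform in $\eps$; the Arzel\`a--Ascoli theorem and uniqueness of the limit then give
\begin{equation*}
u_n^\eps\longrightarrow u_n^0
\qquad\text{in}\ C^2(\overline{\domain}).
\end{equation*}
Getting this uniform regularity up to the corners of $\overline{\domain}$ is, I expect, the technical core of the argument, since mixed Neumann data on a Lipschitz (but here piecewise smooth) rectangle requires either an explicit even reflection in $t$ to convert the problem to a smooth one on $(0,L)\times(-2,2)$ with periodic boundary conditions in $t$, or a localised version of the Lopatinskii-type estimates; the reflection trick, available because $g_\eps$ depends smoothly on $t$, is my preferred route.

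Once $C^2$-convergence is in hand, the geometric conclusions follow painlessly. Since $u_n^0$ depends only on $s$ and has finitely many non-degenerate critical points $s_m^{(n)}$, one has $|\partial_s u_n^0|\ge c(\delta)>0$ outside the slabs $S_m^{(n)}(\delta)$; by $C^1$-convergence the same lower bound holds for $\partial_s u_n^\eps$ with a possibly smaller constant, hence $\nabla u_n^\eps$ does not vanish there (the $t$-derivative is $O(\eps)$ automatically from the equation and the regularity bound), proving the absence of stationary points outside the slabs and thus the inclusions \eqref{eq:location_max}--\eqref{eq:location_min}. Inside a slab $S_m^{(n)}(\delta)$ with $m$ even, the non-degeneracy $\partial_s^2 u_n^0(s_m^{(n)})<0$ together with $C^2$-convergence yields $\partial_s^2 u_n^\eps<0$ throughout a possibly smaller slab, ruling out local minima there; the symmetric argument applies to $m$ odd. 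Finally, to see that the global maximum is actually achieved in the union of the even slabs (and not, say, at a boundary point that happens to lie outside them), one compares the value of $u_n^\eps$ at $s_m^{(n)}$, close to $u_n^0(s_m^{(n)})>0$, with the values on the slabs of the opposite parity, close to the (negative) minimum values of $u_n^0$; the gap is order one while the error is $o(1)$, which settles the statement for small enough $\eps_0$ depending on $\delta$, $N$, $\Gamma$ and $\mathcal{A}$.
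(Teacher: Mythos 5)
Your overall architecture coincides with the paper's: pull the problem back to $\domain=(0,L)\times(-1,1)$ via $\mathscr{L}_\eps$, identify the Neumann Laplacian with $H_\eps$ acting on $\sii(\domain)$, prove norm-resolvent convergence of $H_\eps$ to the decoupled operator $H_0$, upgrade the resulting $\sii$-convergence of eigenfunctions to convergence in $C^2(\overline{\domain})$, and then read off the location of critical points. Two places deserve scrutiny.

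First, the route to $C^2(\overline{\domain})$-convergence. You propose to obtain an $\eps$-uniform bound on $\|u_n^\eps\|_{C^{2,\alpha}(\overline{\domain})}$ from Schauder theory (after reflecting across $t=\pm 1$) and then conclude by Arzel\`a--Ascoli plus uniqueness of the $\sii$-limit. The difficulty is that the operator $H_\eps=-f_\eps^{-1}\partial_s f_\eps^{-1}\partial_s-\eps^{-2}f_\eps^{-1}\partial_t f_\eps\partial_t$ is \emph{not} uniformly elliptic on the fixed domain $\domain$: the transverse coefficient $\eps^{-2}$ blows up, so the ellipticity ratio degenerates and the Schauder constants a priori depend on $\eps$. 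An unqualified appeal to ``uniform Schauder estimates'' therefore leaves a genuine gap; one must track the $\eps$-dependence explicitly and use that the transverse derivatives of $\psi_n^\eps-\psi_n^0$ gain powers of $\eps$. The paper does this by hand through an $L^2$-energy bootstrap (multiplying the equation for $\psi=\psi_n^\eps-\psi_n^0$ by $-\partial_s^2\psi$, $-\partial_s^3\psi$, etc.), obtaining $\|\psi\|_{W^{3,2}}=\mathcal{O}(\eps)$ and $\|\psi\|_{W^{4,2}}=\mathcal{O}(1)$, hence $\|\psi\|_{C^{1,\gamma}}=\mathcal{O}(\eps)$ and $\|\psi\|_{C^{2,\gamma}}=\mathcal{O}(1)$ by Sobolev embedding, and then uses an interpolation argument (Lemma 4.3) to squeeze out $\|\psi\|_{C^2}=\mathcal{O}(\eps^a)$. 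If you want to keep the Schauder/reflection route, you must explain why the constants stay bounded despite the degeneration; the $L^2$-bootstrap sidesteps this entirely and also gives a convergence \emph{rate}, which the compactness argument does not.

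Second, and more serious for the hot spots statement proper: your argument does not establish that the global extrema avoid the open ends $\mathscr{L}_\eps\big(S_0^{(n)}(\delta)\big)\setminus\mathscr{L}_\eps\big(\tilde{S}_0^{(n)}\big)$ and $\mathscr{L}_\eps\big(S_{n-1}^{(n)}(\delta)\big)\setminus\mathscr{L}_\eps\big(\tilde{S}_{n-1}^{(n)}\big)$; this is precisely what makes Theorem 1.3 imply Corollary 1.4. You exclude stationary points \emph{outside} all the slabs and rule out local minima \emph{inside} interior even slabs, but you never exclude interior stationary points (in particular local maxima) inside the open end slabs. The fix is contained in your own $\partial_s^2$-argument if pushed one step further: once $\partial_s^2 u_n^\eps<0$ on $S_0^{(n)}(\delta)$ is established from the $C^2$-convergence, the Neumann condition $\partial_s u_n^\eps(0^+,t)=0$ gives, for any $(s_0,t_0)$ in the open slab,
\begin{equation*}
\partial_s u_n^\eps(s_0,t_0)=\int_0^{s_0}\partial_s^2 u_n^\eps(\sigma,t_0)\,\dd\sigma<0,
\end{equation*}
so there is no stationary point in the open slab and the maximum over the slab is attained on the edge $\{0\}\times[-1,1]$ (this is the paper's Lemma 5.3). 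Similarly at $s=L$. Without this step, the inclusions \eqref{eq:location_max}--\eqref{eq:location_min} only place the extrema in the closed slabs $\overline{S_0^{(n)}(\delta)}$ and $\overline{S_{n-1}^{(n)}(\delta)}$, which is strictly weaker than what the theorem asserts.

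For the last claim of the theorem (absence of local minima/maxima in the interior slabs), your $\partial_s^2$-sign argument is a legitimate alternative to the paper's maximum-principle argument (which uses instead the sign of $u_n^\eps$ and the equation $H_\eps u_n^\eps=\lambda_n^\eps u_n^\eps$); both work, and yours is arguably more elementary once the $C^2$-convergence is available. The value-comparison step to distinguish even from odd slabs coincides with the paper's Lemma 5.1.
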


Noticing that the side images  
$\mathscr{L}_\eps\big(\tilde{S}_0^{(n)}\big)$ 
and $\mathscr{L}_\eps\big(\tilde{S}_{n-1}^{(n)}\big)$
are subsets of the boundary~$\partial\Omega_\eps$ for all $n \geq 2$,
the case $N=2$ of the theorem particularly implies
the hot spots conjecture for the thin strips:
\begin{corollary}\label{Corol.location} 
There exists a positive constant~$\eps_0$  
depending on~$\Gamma$ and~$\mathcal{A}$ such that, 
for all $\eps < \eps_0$ and all $x \in \Omega_\eps$,
$$
  \min_{\mathscr{L}_\eps(\tilde{S}_{1}^{(2)})} 
  u_2^\eps
  < u_2^\eps(x) < 
  \max_{\mathscr{L}_\eps(\tilde{S}_{0}^{(2)})} 
  u_2^\eps
  \,.
$$
\end{corollary}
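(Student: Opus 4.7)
The Corollary is the $n=2$ case of Theorem \ref{Thm.location}. On the interval $(0,L)$ the second Neumann eigenfunction $u_2^0$ is proportional to $\cos(\pi s/L)$, whose stationary points are the endpoints $s_0^{(2)}=0$ and $s_1^{(2)}=L$; accordingly $\tilde S_0^{(2)}=\{0\}\times[-1,1]$ and $\tilde S_1^{(2)}=\{L\}\times[-1,1]$, and their $\mathscr{L}_\eps$-images are the two ``end caps'' of $\Omega_\eps$, both contained in $\partial\Omega_\eps$. For $\eps<\eps_0$ as in the Theorem, the global maximum and global minimum of $u_2^\eps$ lie respectively in these two disjoint subsets of the boundary, so for every $x\in\Omega_\eps$ one obtains the strict inequalities claimed.

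Reducing the Corollary in this way to the Theorem, my strategy for the latter is a thin-strip asymptotic analysis. I would unfold the strip via the smooth diffeomorphism $\mathscr{L}_\eps\colon\domain\to\Omega_\eps$ so as to transfer the Neumann eigenvalue problem on $\Omega_\eps$ to one on the fixed reference rectangle $\domain=(0,L)\times(-1,1)$ endowed with an $\eps$-dependent metric. In these Fermi coordinates the pulled-back metric reads $\mathrm{diag}\bigl(f_\eps(s,t)^2,\eps^2\bigr)$ with $f_\eps(s,t)=1-\eps t\kappa(s)+O(\eps^2)$, so the transformed Laplace--Beltrami operator is a longitudinal second-order elliptic operator in~$s$ with bounded coefficients plus a transverse part whose non-trivial Neumann eigenvalues blow up like $\eps^{-2}$. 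For fixed $N$, the lowest $N$ eigenvalues are therefore asymptotically governed by the zero transverse mode, i.e.\ by functions of $s$ alone.

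Standard thin-tube analysis then yields norm-resolvent convergence of the transformed operator to the 1D Neumann Laplacian on $(0,L)$, so that $\lambda_n^\eps\to\lambda_n^0$ and $u_n^\eps\circ\mathscr{L}_\eps\to u_n^0$ in $L^2(\domain)$; simplicity and separation of the $\lambda_n^0$ then force simplicity of $\lambda_n^\eps$ for $n\le N$ once $\eps$ is small. To upgrade the convergence to $C^1(\overline\domain)$ I would apply uniform elliptic regularity for the $\eps$-dependent but uniformly elliptic family (reflecting across $\partial\domain$ to encode the Neumann condition), combined with Sobolev embedding at sufficiently high order.

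The passage from $C^1$-convergence to the localization \eqref{eq:location_max}--\eqref{eq:location_min} is then soft: the 1D limit $u_n^0$ has exactly $n$ non-degenerate stationary points $\{s_m^{(n)}\}$, with $\partial_s u_n^0$ of constant alternating sign on each intermediate open subinterval, so $|\nabla_{s,t}(u_n^\eps\circ\mathscr{L}_\eps)|$ is bounded below on $\overline\domain\setminus\bigcup_m S_m^{(n)}(\delta)$ for $\eps$ small enough, which rules out stationary points outside the prescribed strips; moreover the sign of $u_n^0$ near each $s_m^{(n)}$ identifies which strip may host a maximum and which a minimum. The main obstacle will be making the $C^1$-convergence quantitative and uniform up to $\partial\domain$, particularly near the four corners of $\domain$, and in a form that lets $\eps_0$ depend explicitly on $\delta$ through the minimum size of $|\partial_s u_n^0|$ on the complement of the $\delta$-neighbourhoods of its stationary points.
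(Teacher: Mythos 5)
Your reduction of the Corollary to the $n=2$ case of Theorem~\ref{Thm.location} is correct and matches the paper's one-line observation: the end caps $\mathscr{L}_\eps(\tilde{S}_0^{(2)})$ and $\mathscr{L}_\eps(\tilde{S}_1^{(2)})$ are the images of $\{0\}\times[-1,1]$ and $\{L\}\times[-1,1]$ and hence lie in $\partial\Omega_\eps$, so once the Theorem confines the global maximizer and minimizer sets to them, the strict inequalities for interior~$x$ follow.

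The gap is in your sketch of the proof of the Theorem, and it is precisely the step on which the Corollary hinges. You assert that $C^1$-convergence of $\psi_n^\eps$ to $\psi_n^0$ is ``soft''ly sufficient to localize the extrema, by lower-bounding $|\partial_s\psi_n^\eps|$ away from the stationary set of~$\psi_n^0$. That argument (Lemma~\ref{lem:no_stac_1} in the paper) excludes stationary points only on $\domain\setminus\bigcup_m S_m^{(n)}(\delta)$; inside the terminal strips $S_0^{(n)}(\delta)$ and $S_{n-1}^{(n)}(\delta)$ the derivative $\partial_s\psi_n^0$ vanishes at $s=0$ (resp.\ $s=L$), so $C^1$-closeness carries no sign information there and cannot rule out an interior stationary point of~$\psi_n^\eps$ nearby. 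Thus $C^1$-convergence alone only places the maximizers in $\mathscr{L}_\eps\big(\overline{S_0^{(n)}(\delta)}\big)$, a thin neighbourhood containing interior points of~$\Omega_\eps$ --- the paper explicitly states this is the \emph{weaker} version of the Theorem and is not enough for the hot spots conclusion. The missing ingredient is second-order control: the paper establishes a $C^{2,\gamma}$ bound and, via Lemma~\ref{lem:ref}, genuine $C^2$-convergence, which yields $\partial_s^2\psi_n^\eps<0$ on $S_0^{(n)}(\delta)$; combined with the Neumann condition $\partial_s\psi_n^\eps(0,\cdot)=0$ this forces $\partial_s\psi_n^\eps<0$ throughout $S_0^{(n)}(\delta)$ and pins the maximum to $s=0$ (Lemma~\ref{lem:no_stac_2}). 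You would need to add this step, which in turn requires pushing the elliptic bootstrap to $W^{4,2}$, to make your argument actually deliver Corollary~\ref{Corol.location}.
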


The dependence of the critical half-width~$\eps_0$
on the geometry on~$\Gamma$ and~$\mathcal{A}$ 
is discussed in more detail in 
Remark~\ref{Rem.dependence.bis} below.
In particular, a certain uniformity of the result can be achieved.

\subsection{Motivation II: quantum graphs}
Another motivation of this paper is to provide precise estimates
for the Neumann eigenvalues and eigenfunctions in the thin strips,
which is of current interest in the context of approximation 
of quantum dynamics in a thin nanostructure by 
an effective model in a lower-dimensional manifold.
We refer to a recent monograph by Post~\cite{Post} 
for an account on the topic when the nanostructure is modelled by a graph
and for many references.

For experts in the community it is known that
the Neumann Laplacian in a tubular neighbourhood
of a submanifold of a Riemannian manifold converges in a sense
to the Laplace-Beltrami operator on the submanifold
(see~\cite{Schatzman_1996} for a classical reference 
on the convergence of spectra).
Hence, it is expected that 
the Neumann eigenfunctions~$u_n^\eps$ of~$\Omega_\eps$
converges \emph{in a sense} 
to Neumann eigenfunctions~$u_n^0$ of~$\Gamma$. 
Since the hot spots conjecture is clearly satisfied for the latter,
the main idea behind the proof of Theorem~\ref{Thm.location} 
is to establish the convergence in a sufficiently good topology.

In this paper, as the width of the strip tends to zero,
we first establish the operator convergence 
between the Neumann Laplacians in~$\Omega_\eps$ and~$\Gamma$
in a norm-resolvent sense
(see~\cite{Saito_2001} for a related result in the Euclidean case).
Furthermore, using additional methods, 
we obtain the convergence of eigenfunctions 
together with their derivatives in the \emph{uniform} topology
of sort $C^{2,\gamma}$. This result seems to be new in the context of quantum graphs
and Theorem~\ref{Thm.location} actually follows 
as a consequence of it.
 
\subsection{Ideas of the proof}\label{Sec.ideas} 
Let us now comment on the proof of Theorem~\ref{Thm.location}
in more detail.
Our proof consists roughly of four steps,
combining various methods of differential geometry,
spectral analysis, and theory of elliptic partial differential equations:
\begin{enumerate}[I.]
\item Fermi coordinates;
\hfill (Section~\ref{Sec.geometry})
\item Norm-resolvent convergence;
\hfill (Section~\ref{Sec.nrs})
\item Convergence of eigenfunctions;
\hfill (Section~\ref{Sec.efs})
\item Convergence of extremal points.
\hfill (Section~\ref{Sec.critical})
\end{enumerate}

\subsubsection*{Ad \emph{Step~I}}
First of all, using the natural parametrisation of~$\Omega_\eps$
suggested by the very definition~\eqref{strip}, 
we identify the curved and $\eps$-dependent strip~$\Omega_\eps$ 
with the Riemannian manifold $(\domain,G_\eps)$, 
where $G_\eps := \diag(f_\eps^2,\eps^2)$ 
is an $\eps$-dependent metric.
Here~$f_\eps$ is 
(up to a factor~$\eps$) the Jacobian of the scaled exponential map~$\mathscr{L}_\eps$ 
and as such~$f_\eps$ solves the Jacobi equation, \cf~\cite{Ha_1964} or \cite[Sec.~2]{Gray}, 
\begin{equation}\label{eq:Jacobi}
\left\{
\begin{alignedat}{2}
  \partial_t^2 f_\eps(s,t) 
  +\eps^2 \, K_\eps(s,t) \, f_\eps(s,t) &=0 \,,
  \\
  f_\eps(s,0)&=1 \,, 
  \\
  \partial_t f_\eps(s,0)&=-\eps\,\kappa(s)
  \,,
\end{alignedat}
\right.
\end{equation}
for every $t \in [-1,1]$ taking $s \in [0,L]$ as a fixed parameter,
where
\begin{equation} \label{eq:K_scaling}
K_\eps := K \circ \mathscr{L}_\eps
\end{equation}
is the Gauss curvature~$K$
expressed in the Fermi ``coordinates'' $(s,t)\in\domain$.
In this way, we may identify 
the Neumann Laplacian in $\sii(\Omega_\eps)$
with the Laplace-Beltrami operator  
\begin{equation}\label{operator}
  H_\eps
  :=-f_\eps^{-1}\partial_s f_\eps^{-1}\partial_s
  -\eps^{-2}f_\eps^{-1}\partial_t f_\eps\partial_t
  \qquad \mbox{in} \qquad
  \sii\big(\domain, f_\eps(s,t) \, \dd s \, \dd t \big)
  ,
\end{equation}
subject to Neumann boundary conditions.
As usual, $H_\eps$~is understood as the self-adjoint 
operator in the Hilbert space
$\sii\big(\domain, f_\eps(s,t) \, \dd s \, \dd t \big)$
associated with the closed quadratic form
\begin{equation} \label{eq:form_eps}
  h_\eps[\psi] := \int_\domain f_\eps^{-1} \, |\partial_s\psi|^2 
  + \eps^{-2} \int_\domain f_\eps \, |\partial_t\psi|^2
  \,, \qquad
  \psi \in \dom{h_\eps}  := W^{1,2}(\domain)
  \,.
\end{equation}

\subsubsection*{Ad \emph{Step~II}}
Since $f_\eps \to 1$ as $\eps \to 0$ uniformly in~$\domain$,
it is expected that~$H_\eps$ will be approximated by 
the operator 
\begin{equation*}
  H_0 := -\partial_{s}^{2} - \eps^{-2} \, \partial_{t}^{2}
  \qquad \mbox{in} \qquad
  \sii(\domain)
  \,,
\end{equation*}
subject to Neumann boundary conditions.
Again, $H_0$~is understood as the self-adjoint 
operator in the Hilbert space $\sii(\domain)$
associated with the closed quadratic form
\begin{equation} \label{eq:form_0}
  h_0[\psi] := \int_\domain |\partial_s\psi|^2 
  + \eps^{-2} \int_\domain |\partial_t\psi|^2
  \,, \qquad
  \psi \in \dom{h_0}  := W^{1,2}(\domain)
  \,.
\end{equation}
Since~$H_\eps$ and~$H_0$ act on different Hilbert spaces,
it is convenient to introduce the unitary transform  
$$
  U_\eps:
  L^2\big(\domain, f_\eps(s,t) \, \dd s \, \dd t\big)
  \to L^2(\domain) :
  \big\{ \psi\mapsto f_\eps^{1/2}\psi \big\}
  \,.
$$
Then we may justify the approximation of~$H_\eps$ by~$H_0$
via the following resolvent convergence:
\begin{theorem}\label{theo:res_conv}
There exist positive constants~$C$ and~$\eps_0$
depending on~$\Gamma$ and~$\mathcal{A}$ such that, 
for all $\eps<\eps_0$,	
\begin{equation} \label{eq:res_conv}
  \big\|
  U_\eps(H_\eps +1)^{-1}U_{\eps}^{-1}-(H_0+1)^{-1}
  \big\|
  \leq C \eps
  \,. 
\end{equation}
\end{theorem}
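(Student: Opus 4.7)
My plan is a quadratic-form comparison on $L^2(\domain)$. Writing $\tilde H_\eps := U_\eps H_\eps U_\eps^{-1}$ -- self-adjoint on $L^2(\domain)$ with form $\tilde h_\eps[\phi] := h_\eps[f_\eps^{-1/2}\phi]$ and form domain $W^{1,2}(\domain) = \dom{h_0}$ -- the theorem reduces to $\|(\tilde H_\eps + 1)^{-1} - (H_0 + 1)^{-1}\| \leq C\eps$. Given $f, g \in L^2(\domain)$, put $u := (\tilde H_\eps + 1)^{-1} f$ and $v := (H_0 + 1)^{-1} g$; the standard sesquilinear identity
\begin{equation*}
\big\langle \big[(\tilde H_\eps + 1)^{-1} - (H_0 + 1)^{-1}\big] f, g \big\rangle = h_0(u, v) - \tilde h_\eps(u, v)
\end{equation*}
reduces the claim to the uniform bound $|h_0(u, v) - \tilde h_\eps(u, v)| \leq C\eps\,\|f\|\,\|g\|$.

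I would then expand $\tilde h_\eps[\phi]$ by the product rule applied to $f_\eps^{-1/2}\phi$, writing $\tilde h_\eps - h_0$ as an explicit sum of five terms with integrands involving $f_\eps^{-2}-1$, $\partial_s f_\eps$, and $\partial_t f_\eps$. From~\eqref{eq:Jacobi} and its initial data I derive the uniform pointwise bounds $\|f_\eps - 1\|_\infty, \|\partial_s f_\eps\|_\infty = O(\eps)$ and $\|\partial_t f_\eps + \eps\kappa\|_\infty = O(\eps^2)$, while testing the forms against $u$ and $v$ gives $\|\partial_s u\|_2, \|\partial_s v\|_2 = O(\|f\|+\|g\|)$ and, crucially, $\|\partial_t u\|_2, \|\partial_t v\|_2 = O(\eps(\|f\|+\|g\|))$ -- the latter because the $\eps^{-2}$ prefactor on $\|\partial_t\cdot\|_2^2$ in both $\tilde h_\eps$ and $h_0$ is very costly. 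The three ``easy'' terms in $\tilde h_\eps - h_0$ (those with only $s$-derivatives, a pure potential, or a first-order $s$-cross term) are then all controlled by $C\eps\|f\|\|g\|$ via Cauchy--Schwarz.

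The main obstacle is the two surviving terms arising from $t$-derivatives,
\begin{equation*}
-\tfrac{1}{2}\eps^{-2}\!\!\int f_\eps^{-1}(\partial_t f_\eps)\big(u\,\overline{\partial_t v} + \bar v\,\partial_t u\big)
\quad\text{and}\quad
\tfrac{1}{4}\eps^{-2}\!\!\int f_\eps^{-2}(\partial_t f_\eps)^2\, u\bar v,
\end{equation*}
for which direct Cauchy--Schwarz gives only $O(\|f\|\|g\|)$, since $\eps^{-1}\partial_t f_\eps = O(1)$. To gain the missing factor of $\eps$ I would integrate by parts in $t$ in the cross term -- which, for real $u, v$, is $-\tfrac{1}{2}\eps^{-2}\!\int f_\eps^{-1}(\partial_t f_\eps)\,\partial_t(uv)$ -- producing boundary traces at $t = \pm 1$ plus a bulk integrand $\partial_t(f_\eps^{-1}\partial_t f_\eps) = -f_\eps^{-2}(\partial_t f_\eps)^2 + f_\eps^{-1}\partial_t^2 f_\eps$; by the Jacobi equation the second piece is the bounded $-\eps^2 K_\eps$, while the first partially cancels the other dangerous quadratic term. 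The boundary trace is tamed by the refined Poincar\'e-type estimate $\|w(\cdot, 1) - w(\cdot, -1)\|_{L^2(0, L)} \leq C\|\partial_t w\|_{L^2(\domain)}$, which is $O(\eps\|f\|)$ or $O(\eps\|g\|)$ and thus absorbs the $\eps^{-1}$ prefactor. Finally, inserting $\partial_t f_\eps = -\eps\kappa + O(\eps^2)$ and using the near-$t$-constancy $\|w - \bar w\|_2 \leq C\|\partial_t w\|_2$ (with $\bar w(s) := \tfrac{1}{2}\int_{-1}^1 w\,dt$), the residual effective-potential-type contributions from the bulk and from the boundary cancel at order $O(1)$, leaving only $O(\eps)\|f\|\|g\|$. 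This cancellation encodes the fact that, in contrast to the Dirichlet case, the Neumann thin strip produces no effective potential at leading order.
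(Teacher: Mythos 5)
Your overall strategy---reduce to a sesquilinear form comparison, extract a priori bounds $\|\partial_t u\|,\|\partial_t v\|=O(\eps)$ from the $\eps^{-2}$-weighted forms, then attack the two surviving $\eps^{-2}$-terms---is sound, and your identification of the dangerous pieces
\begin{equation*}
\tfrac14\eps^{-2}\!\!\int f_\eps^{-2}(\partial_t f_\eps)^2\,u\bar v
\quad\text{and}\quad
-\tfrac12\eps^{-2}\!\!\int f_\eps^{-1}(\partial_t f_\eps)\,\partial_t(u\bar v)
\end{equation*}
is exactly right. But the way you dispatch them diverges from the paper, and it is here that the argument is incomplete.

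The paper never conjugates by $U_\eps$ and never integrates by parts in $t$; it keeps $\psi=(H_\eps+1)^{-1}U_\eps^{-1}G$ (which satisfies genuine Neumann conditions) and $\phi=(H_0+1)^{-1}F$, and controls the analogue of your dangerous term by multiplying both resolvent equations by $\kappa t\psi$ and $\kappa t\phi$ respectively and adding. This produces, after rearranging, a direct $O(\|F\|\|G\|)$ bound on $\eps^{-2}\!\int\kappa(\bar\phi_{,t}\psi+\bar\phi\psi_{,t})$, which when combined with $(f_\eps^{1/2})_{,t}=-\tfrac12\eps\kappa+O(\eps^2)$ immediately yields the needed extra factor of $\eps$. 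No boundary traces on $\{t=\pm1\}$ are ever touched.

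Your route, integration by parts in $t$, introduces exactly those traces, and this is where the gap sits. Carrying your computation through, the two dangerous terms reduce, modulo $O(\eps)\|f\|\|g\|$, to
\begin{equation*}
\tfrac12\int_0^L\kappa^2\,\bar u\,\bar v
\;+\;\tfrac12\eps^{-1}\!\!\int_0^L\kappa\,\big[u(\cdot,1)v(\cdot,1)-u(\cdot,-1)v(\cdot,-1)\big].
\end{equation*}
The bulk Jacobi-equation piece and the $K_\eps$-piece do cancel nicely (which is why no effective curvature potential appears), but the $\kappa^2$-piece above does not cancel on its own. Your stated tools---the trace estimate $\|w(\cdot,1)-w(\cdot,-1)\|_{L^2(0,L)}\leq C\|\partial_t w\|_{L^2}$ and the near-$t$-constancy $\|w-\bar w\|\leq C\|\partial_t w\|$---only give $u(\cdot,1)-u(\cdot,-1)=O(\eps)$ with no coefficient, so the second line is $O(1)$ and the residual is $O(\|f\|\|g\|)$, not $O(\eps\|f\|\|g\|)$.

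What actually rescues the computation (and what you omit) is the precise asymptotics of that trace difference, and this requires invoking the boundary conditions of the operator domains, not just the $W^{1,2}$ form estimates. Because $u=f_\eps^{1/2}\psi$ with $\psi\in\dom{H_\eps}$ Neumann, the conjugated function $u$ does \emph{not} satisfy Neumann at $t=\pm1$ but rather the Robin condition $\partial_t u=\tfrac12 f_\eps^{-1}(\partial_t f_\eps)\,u=(-\tfrac12\eps\kappa+O(\eps^2))\,u$. Combined with $\partial_t v(\cdot,\pm1)=0$ and a second-order bound of the form $\|\partial_t^2 u\|=O(\eps^2)$ (itself needing the PDE, not just the form), one obtains $u(\cdot,1)-u(\cdot,-1)=-\eps\kappa\,\bar u+O(\eps^2)$ and $v(\cdot,1)-v(\cdot,-1)=O(\eps^2)$, whence $\tfrac12\eps^{-1}\!\int\kappa[\cdot]=-\tfrac12\int\kappa^2\bar u\bar v+O(\eps)$, and the residual cancels. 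You assert the cancellation but never bring in the Robin condition or the $\|\partial_t^2 u\|$ bound that make it happen; as written, ``using near-$t$-constancy'' does not deliver the required coefficient and the proof does not close. Either supply these missing trace asymptotics, or replace the $t$-integration-by-parts with the paper's $\kappa t\psi$-multiplier device, which sidesteps the traces entirely.
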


In the proof of Theorem~\ref{theo:res_conv},
we use the idea of establishing the norm-resolvent convergence
of operators with help of associated sesquilinear forms
(see~\cite[Thm.~6.3.6]{Kato} for the general criterion
and~\cite[Sec.~3]{K5} for its implementation 
in a geometrically analogous setting).

\subsubsection*{Ad \emph{Step~III}}
As a consequence of Theorem~\ref{theo:res_conv},
we get the convergence of eigenvalues of~$H_\eps$
to the eigenvalues of~$H_0$.
This statement is not precise 
because~$H_0$ is still $\eps$-dependent,
so its eigenvalues depend on~$\eps$.
However, it is easy to see from the decoupled form of~$H_0$ that,
for any $N \in \N :=\{1,2,\ldots\}$, there exists 
a positive number~$\eps_N$ depending on~$L$
such that, for all $\eps \leq \eps_N$ and any $n \in \{1,\dots,N\}$,
the $n$th eigenvalue of~$H_0$ 
is equal to the $n$th eigenvalue of the Neumann Laplacian
in the interval~$(0,L)$.
The latter was denoted by~$\lambda_n^0$  
and the $n$th eigenvalue of~$H_\eps$ 
coincides with~$\lambda_n^\eps$
(the $n$th eigenvalue of the Laplacian in~$\Omega_\eps$).  
Hence, 
\begin{equation}\label{ev-conv}
  | \lambda_n^\eps - \lambda_n^0 | \leq C \eps
  \,,
\end{equation}
where~$C$ is an $\eps$-independent constant
(possibly different from that of Theorem~\ref{theo:res_conv}).
In particular, the $n$th eigenvalue of~$H_\eps$ 
is simple for all sufficiently small~$\eps$
and converges to the $n$th eigenvalue of~$H_0$.  

At the same time, recalling the previously introduced notation~$u_n^0$
for the $n$th eigenfunction
of the Neumann Laplacian in the interval~$(0,L)$,
it is easy to see that, 
for all $\eps \leq \eps_N$ and any $n \in \{1,\dots,N\}$,    
the tensor product $\psi_n^0 := u_n^0 \otimes 1$ 
is the $n$th eigenfunction of~$H_0$.
Let~$\psi_n^\eps$ denote the $n$th eigenfunction of~$H_\eps$
and recall the notation~$u_n^\eps$ for the $n$th eigenfunction
of the Neumann Laplacian in the strip~$\Omega_\eps$. 
Assuming that $\psi_n^\eps \in \sii(\domain)$
and $u_n^\eps \in \sii(\Omega_\eps)$ have the same norm,
we have the relation
$\psi_n^\eps = \sqrt{\eps}\,u_n^\eps \circ \mathscr{L}_\eps$.
It follows from Theorem~\ref{theo:res_conv} that~$\psi_n^\eps$ 
can be normalised in such a way that
\begin{equation}\label{L2-conv} 
  \| \psi_n^\eps - \psi_n^0 \|_{\sii(\domain)} \leq C\eps
  \,,
\end{equation}
where~$C$ is an $\eps$-independent constant
(possibly different from that of Theorem~\ref{theo:res_conv}).

Unfortunately, the convergence~\eqref{L2-conv} is too weak 
to get an information on pointwise properties of~$\psi_n^\eps$
in terms of~$\psi_n^0$ 
(the latter eigenfunction is explicitly known).
For this reason, we look at the partial differential equation
that the difference $\psi_n^\eps - \psi_n^0$ satisfies 
and using the ideas of elliptic regularity theory 
in the spirit of \cite{FK4,K-Tusek_2015} 
and the Sobolev embedding theorem,
we obtain an improved result
\begin{equation}\label{C1-conv} 
  \| \psi_n^\eps - \psi_n^0 \|_{C^{1,\gamma}(\overline{\domain})}
  \leq C_\gamma\eps
\end{equation}
with any $\gamma \in (0,1)$,
where~$C_\gamma$ is an $\eps$-independent constant.

\subsubsection*{Ad \emph{Step~IV}}
Since the extremal points of~$\psi_n^0$ are explicitly known, 
convergence~\eqref{C1-conv} is sufficient to conclude 
with a weaker version of~\eqref{eq:location_max} and~\eqref{eq:location_min}, 
namely with just closures of~$S_0^{(n)}(\delta)$ and $S_{n-1}^{(n)}(\delta)$
instead of the more restricted sets~$\tilde{S}_0^{(n)}$ 
and $\tilde{S}_{n-1}^{(n)}$
on the right-hand side of the formulae.
Here the main idea is to locate the stationary points
with help of~\eqref{C1-conv}.
To exclude the possibility of existence of extremal points
of~$\psi_n^\eps$ in the boundary narrow regions 
$S_0^{(n)}(\delta)$ and $S_{n-1}^{(n)}(\delta)$ as well
(which is particularly crucial for the proof of the hot spots conjecture),
we still try to continue with the ideas of elliptic regularity theory as above,
but now we are only able to show the bound
\begin{equation}\label{C2-bound} 
  \| \psi_n^\eps - \psi_n^0 \|_{C^{2,\gamma}(\overline{\domain})}
  \leq C_\gamma
\end{equation}
with any $\gamma \in (0,1)$,
where~$C_\gamma$ is an $\eps$-independent constant
(possibly different from that of~\eqref{C1-conv}).
Although the right-hand side does not vanish as $\eps \to 0$,
the presence of the H\"older exponent~$\gamma$ in~\eqref{C2-bound}
enables one to eventually exclude the existence of extremal points
in the undesired boundary narrow regions. 

However, as was pointed out by an anonymous referee, 
one can actually proceed more straightforwardly.
In fact,  it is even possible to prove the convergence of eigenfunctions in the ``ideal'' space $C^2(\overline{\domain})$. This can be done using Lemma~\ref{lem:ref} whose variant was proposed by her/him and that we decided to include in the manuscript. Furthermore, as was correctly remarked by the referee, \eqref{C2-bound} may be combined together with the $C^2(\overline{\domain})$-convergence result to obtain
\begin{equation}\label{eq:C2-conv}
 \|\psi_n^\varepsilon-\psi_n^0\|_{C^{2,\gamma}(\overline{\domain})}\leq C_{\gamma,a}\varepsilon^{(1-\gamma)a},
\end{equation}
where $\gamma\in(0,1)$, $a<\frac{1}{2}$, and $C_{\gamma,a}$ is an $\varepsilon$-independent constant, see Remark~\ref{rem:ref}.
We are grateful to the referee for these substantial 
improvements upon our convergence results.

Finally, the claims of Theorem~\ref{Thm.location} about the absence
of local extrema of~$\psi_n^\eps$ in~$S_m^{(n)}(\delta)$
eventually employs the maximum principle, too.

\medskip
\begin{remark}
Since this paper is primarily motivated by the hot spots conjecture,
we focus on the corresponding consequences of~\eqref{C1-conv} and~\eqref{C2-bound}
summarised in Theorem~\ref{Thm.location}. Notice, however, 
that the established convergence results for eigenfunctions 
can be interested in different contexts, too.
For instance, \eqref{C1-conv}~immediately enables one to locate
the nodal lines of eigenfunctions~$\psi_n^\eps$ in terms of 
the nodal lines of~$\psi_n^0$.   
We refer to~\cite{FK4} for this original idea in the context
of Dirichlet strips.
\end{remark}

The rest of the paper consists of four sections 
following the individual steps of the proof 
as indicated above.

\section{Fermi coordinates}\label{Sec.geometry}
%
The above identification of~$\Omega_\eps$ with $(\domain,G_\eps)$
is justified provided that the exponential map
$\mathscr{L}_\eps : \domain \to \Omega_\eps$ is a diffeomorphism. 
Since~$\Gamma$ is compact, there exists a positive number~$\eps_0$  
depending on the geometry of~$\Gamma$ and~$\mathcal{A}$
such that the required property holds for all $\eps < \eps_0$.
The smallness of~$\eps_0$ is determined by requiring 
that the function~$f_\eps$ from~\eqref{eq:Jacobi} is positive
(to ensure that $\mathscr{L}_\eps : \domain \to \Omega_\eps$ 
is a local diffeomorphism by the inverse function theorem)  
and that~$\mathscr{L}_\eps$ is injective 
(to get a global diffeomorphism).
The former leads us to look at 
the asymptotic behaviour of~$f_\eps$ as $\eps\to 0$.

Firstly, remark that, due to \eqref{eq:K_scaling},
\begin{equation*} 
  \|K_\eps\|_{L^\infty(\domain)}
  = \|K\|_{L^\infty(\Omega_\eps)}
  \leq \|K\|_{L^\infty(\mathcal{A})}
  =: \|K\|_\infty
  \,,
\end{equation*}
where the bound is independent of~$\eps$
(but of course a bound in a fixed neighbourhood of~$\Gamma$
would be sufficient for our purposes). 
Also define $\|\kappa\|_\infty := \|\kappa\|_{L^\infty((0,L))}$. 
With this notation in hand we have
\begin{proposition}
For all $\eps\in\big(0,\sqrt{2/\|K\|_\infty}\big)$ 
(or any $\eps>0$ in the case $K=0$), 
 \begin{equation}\label{eq:f_bound}
    1-C_\eps\leq f_\eps(x)\leq 1+C_\eps \qquad (\forall x\in\domain)
 \end{equation}
 with
 \begin{equation*}
    C_\eps:=\eps\|\kappa\|_\infty+\frac{1}{2}\eps^2\|K\|_\infty\frac{1+\eps\|\kappa\|_\infty}{1-\frac{1}{2}\eps^2\|K\|_\infty}.
 \end{equation*}
\end{proposition}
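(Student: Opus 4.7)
The plan is to convert the Jacobi initial value problem \eqref{eq:Jacobi} into an integral equation, and then extract the bound by a short bootstrap argument.

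First I would integrate~\eqref{eq:Jacobi} twice in~$t$, using the initial data at $t=0$, to obtain the Volterra-type representation
\begin{equation*}
  f_\eps(s,t) = 1 - \eps\,\kappa(s)\,t
  - \eps^2 \int_0^t (t-\tau) \, K_\eps(s,\tau)\, f_\eps(s,\tau)\,\dd\tau,
\end{equation*}
valid for $(s,t)\in\overline{\domain}$. Since $|t-\tau|\le |t|\le 1$ for $t\in[-1,1]$ and the inner integration range has length at most~$1$, the absolute value of the remainder integral is crudely bounded by $\tfrac12\,\|K\|_\infty$ times the sup-norm of~$f_\eps$ over a slab including $(s,\tau)$.

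Next I would set $M(t):=\sup\{|f_\eps(s,\sigma)| : s\in[0,L],\ |\sigma|\le |t|\}$ and insert the crude bound into the integral representation. Using $\|K_\eps\|_{L^\infty(\domain)}\le\|K\|_\infty$ (already noted in the excerpt) together with $\|\kappa\|_\infty$ gives, for every $t\in[-1,1]$,
\begin{equation*}
  M(t) \;\le\; 1 + \eps\,\|\kappa\|_\infty + \tfrac12\,\eps^2\,\|K\|_\infty\, M(t).
\end{equation*}
Under the hypothesis $\eps<\sqrt{2/\|K\|_\infty}$ (vacuous when $K\equiv 0$), the coefficient $1-\tfrac12\eps^2\|K\|_\infty$ is strictly positive, and solving for $M(t)$ yields the uniform a~priori bound
\begin{equation*}
  M(t) \;\le\; \frac{1+\eps\,\|\kappa\|_\infty}{1-\tfrac12\,\eps^2\,\|K\|_\infty}.
\end{equation*}

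Finally, I would plug this a~priori bound back into the integral identity. Separating the explicit affine part $1-\eps\kappa(s)t$ from the integral term, and using $|\kappa(s)t|\le\|\kappa\|_\infty$ on $\overline{\domain}$, one obtains
\begin{equation*}
  |f_\eps(s,t)-1| \;\le\; \eps\,\|\kappa\|_\infty
  + \tfrac12\,\eps^2\,\|K\|_\infty\,M(t)
  \;\le\; C_\eps,
\end{equation*}
which is exactly~\eqref{eq:f_bound}. There is no real obstacle here; the only delicate point is ensuring that the bootstrap closes, which is precisely the role of the assumption $\eps^2<2/\|K\|_\infty$ (allowing the self-referential term $\tfrac12\eps^2\|K\|_\infty M(t)$ to be absorbed into the left-hand side). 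The positivity of $f_\eps$ needed in Section~\ref{Sec.geometry} then follows automatically from $1-C_\eps>0$ for $\eps$ sufficiently small.
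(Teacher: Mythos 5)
Your proposal is correct and follows essentially the same route as the paper: both integrate the Jacobi ODE to obtain a Volterra identity, bound the remainder to get a self-referential sup-norm inequality whose self-referential term is absorbed under the hypothesis $\frac{1}{2}\eps^2\|K\|_\infty<1$, and then plug the resulting a priori bound $\|f_\eps\|_{L^\infty}\le(1+\eps\|\kappa\|_\infty)/(1-\frac{1}{2}\eps^2\|K\|_\infty)$ back into the integral identity. The only differences are cosmetic (your single integral with kernel $(t-\tau)$ versus the paper's iterated double integral, and your $t$-parametrised slab supremum $M(t)$ versus the global $\|f_\eps\|_{L^\infty(\domain)}$).
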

\begin{proof}
 The problem \eqref{eq:Jacobi} is equivalent to the following integral equation
 \begin{equation} \label{eq:f_int}
  f_\eps(s,t)=1-\eps t\kappa(s)-\eps^2\int_0^t\int_0^\eta (K_\eps f_\eps)(s,\xi)\,\dd\xi\,\dd\eta \,.
 \end{equation}
Since, due to our regularity assumptions, 
$f_\eps$ is as smooth as we need on $\overline{\domain}$, we have
 \begin{equation} \label{eq:f_bound_est}
  \big| \|f_\eps\|_{L^\infty(\domain)}-1 \big|
  \leq \|f_\eps-1\|_{L^\infty(\domain)} 
  \leq \eps\|\kappa\|_\infty
  +\frac{1}{2}\eps^2\|K\|_\infty
  \|f_\eps\|_{L^\infty(\domain)}.
 \end{equation}
If 
 \begin{equation} \label{eq:eps_ass_1}
   \frac{1}{2}\eps^2\|K\|_\infty <1, 
 \end{equation}
this yields
 \begin{equation*} 
   \frac{1-\eps\|\kappa\|_\infty}{1+\frac{1}{2}\eps^2\|K\|_\infty }
   \leq\|f_\eps\|_{L^\infty(\domain)}
   \leq\frac{1+\eps\|\kappa\|_\infty}{1-\frac{1}{2}\eps^2\|K\|_\infty }.
 \end{equation*}
Putting the second inequality into \eqref{eq:f_int} we arrive at \eqref{eq:f_bound}.
\end{proof}
\begin{corollary}
 There exists $\tilde\eps>0$ such that for all $\eps\leq \tilde\eps$, $\mathscr{L}_\eps : \domain \to \Omega_\eps$ 
 is a local diffeomorphism.
\end{corollary}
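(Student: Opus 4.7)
The plan is to deduce the corollary directly from the proposition via the inverse function theorem. First I would observe that $\mathscr{L}_\eps : \domain \to \mathcal{A}$ is a smooth map between two-dimensional manifolds, so it suffices to show that its differential is invertible at every point of $\domain$ for all sufficiently small $\eps$. In Fermi coordinates $(s,t)$, the Riemannian Jacobian of $\mathscr{L}_\eps$ equals $\eps f_\eps(s,t)$ (this is the content of $G_\eps=\diag(f_\eps^2,\eps^2)$ together with the unit-tangent/geodesic normalisation), so local invertibility of $d\mathscr{L}_\eps$ is equivalent to $f_\eps(s,t)\neq 0$ throughout~$\overline{\domain}$.

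Next I would invoke the lower bound in \eqref{eq:f_bound}: for every $\eps\in\big(0,\sqrt{2/\|K\|_\infty}\big)$ we have $f_\eps(s,t)\geq 1-C_\eps$ on~$\domain$, and by smoothness of $f_\eps$ this extends by continuity to $\overline{\domain}$. From the explicit formula
\[
  C_\eps=\eps\|\kappa\|_\infty+\frac{1}{2}\eps^2\|K\|_\infty\,\frac{1+\eps\|\kappa\|_\infty}{1-\frac{1}{2}\eps^2\|K\|_\infty},
\]
it is immediate that $C_\eps$ is continuous and monotonically increasing in $\eps$ on a neighbourhood of $0$ and that $C_\eps\to 0$ as $\eps\to 0^+$. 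Hence there exists $\tilde\eps\in\big(0,\sqrt{2/\|K\|_\infty}\big)$ such that $C_{\tilde\eps}<1$, which by monotonicity gives $C_\eps<1$, hence $f_\eps>0$ on $\overline{\domain}$, for every $\eps\leq\tilde\eps$.

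Combining these two observations, for every $\eps\leq\tilde\eps$ the differential $d\mathscr{L}_\eps$ has nowhere vanishing determinant on $\domain$, and the inverse function theorem then yields that $\mathscr{L}_\eps$ is a local diffeomorphism onto its image. There is no serious obstacle here — the corollary is essentially a rephrasing of the positivity of $f_\eps$ established in the proposition, and the only care needed is to choose $\tilde\eps$ small enough so that both the hypothesis $\eps^2\|K\|_\infty<2$ of the proposition and the inequality $C_\eps<1$ are simultaneously satisfied. Note also that global injectivity (and thus a genuine \emph{global} diffeomorphism) is a separate statement that would require a compactness/tubular-neighbourhood argument on $\Gamma$, which is precisely why the corollary is stated only at the level of local diffeomorphisms.
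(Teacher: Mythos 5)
Your proposal is correct and follows the same route the paper sketches: positivity of the Jacobian factor $f_\eps$ (guaranteed by $C_\eps<1$, which holds for small $\eps$ since $C_\eps\to 0^+$ monotonically) combined with the inverse function theorem. You also correctly flag that global injectivity is a separate, subsequent step, exactly as in the paper's discussion.
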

Note that one can take $\tilde\eps$ arbitrarily close to 
but strictly below the unique solution 
of the equation $C_\eps=1$ 
with respect to all $\eps$ satisfying \eqref{eq:eps_ass_1}. 
The uniqueness follows from the fact that $\eps\mapsto C_\eps$ 
is increasing on $\big(0,\sqrt{2/\|K\|_\infty}\big)$ 
and it maps this interval onto $(0,+\infty)$.

The map~$\mathscr{L}_\eps$ will become 
a global diffeomorphism provided that we further restrict the smallness of~$\eps$ 
by requiring that~$\mathscr{L}_\eps$ is injective (which is always possible because we assume  that~$\Gamma$ is embedded in~$\mathcal{A}$).

\begin{remark}\label{Rem.dependence}
Abandoning the geometrical interpretation of~$\Omega_\eps$
as a tubular neighbourhood of~$\Gamma$ embedded in~$\mathcal{A}$,
one may consider $(\domain,G_\eps)$ as an abstract Riemannian manifold, in which case it is enough to assume that 
$\eps\leq\tilde\eps$. Then $\Omega_\eps$ is just immersed in~$\mathcal{A}$. At the same time, $H_\eps$~is a well defined self-adjoint operator under the mere hypothesis $\eps\leq\tilde\eps$.
\end{remark}

For all $\eps\leq\tilde\eps$ and any non-negative integer~$k$, we have
\begin{align*}
&\|\partial_s^k K_\eps\|_{L^\infty(\domain)}
=\|\partial_s^k K\|_{L^\infty(\domain_\eps)}\leq\|\partial_s^k K\|_{L^\infty(\domain_{\tilde\eps})},\\
&\|\partial_t^k K_\eps\|_{L^\infty(\domain)} 
=\eps^k \, \|\partial_u^k K\|_{L^\infty(\domain_\eps)}\leq \eps^k \, 
\|\partial_u^k K\|_{L^\infty(\domain_{\tilde\eps})},  
\end{align*}
where we introduced the notation $\domain_{\eps}:=(0,L)\times(-\eps,\eps)$, for any $\eps>0$. Remark that the norms on the right-hand sides are $\eps$-independent quantities. With an abuse of notation, we keep the same letter~$K$ 
for $K \circ \mathscr{L}$, where~$\mathscr{L}$ is 
the unscaled exponential map 
$\mathscr{L}(s,u) := \exp_{\Gamma(s)}\big(uN(s)\big)$.
Here and in the sequel,	 we denote by~$(s,u)$ 
a generic variable in~$\domain_\eps$ (respectively, $\domain_{\tilde\eps}$). 

We continue by estimating derivatives of~$f_\eps$.
Since 
 \begin{equation} \label{eq:f_der_t}
  \partial_t f_\eps(s,t)=-\eps\kappa(s)-\eps^2\int_0^t(K_\eps f_\eps)(s,\xi)\,\dd\xi,
 \end{equation}
we obtain
\begin{equation*}
  \|\partial_t f_\eps\|_{L^\infty(\domain)}
  \leq\eps\|\kappa\|_\infty+\eps^2\|K\|_\infty\|f_\eps\|_{L^\infty(\domain)}
  \leq \eps\|\kappa\|_\infty+\eps^2\|K\|_\infty  \frac{1+\eps\|\kappa\|_\infty}{1-\frac{1}{2}\eps^2\|K\|_\infty}=\mathcal{O}(\eps).
\end{equation*}
For the second derivative we get in a straightforward manner
 \begin{equation*}
  \|\partial_t^2 f_\eps\|_{L^\infty(\domain)}
  \leq\eps^2\|K\|_\infty\|f_\eps\|_{L^\infty(\domain)}
  \leq \eps^2\|K\|_\infty \frac{1+\eps\|\kappa\|_\infty}{1-\frac{1}{2}\eps^2\|K\|_\infty}=\mathcal{O}(\eps^2).
 \end{equation*}
The $n$th order derivatives ($n \geq 2$)
with respect to $t$ obey 
 \begin{equation*}
  \partial_t^n f_\eps(s,t)=-\eps^2\sum_{k=0}^{n-2}
  \binom{n-2}{k} \, \eps^k \,
  \big(\partial_u^k K(s,\eps t)\big) \, \partial_t^{n-2-k}f_\eps(s,t).
 \end{equation*}
Therefore, by induction, one has
\begin{equation*}
  \|\partial_t^n f_\eps\|_{L^\infty(\domain)}
  =\mathcal{O}(\eps^n).
\end{equation*}
 
Now for any $n\in\N$,
 \begin{equation*}
  \partial_s^n f_\eps(s,t)=-\eps t \frac{\dd^n \kappa}{\dd s^n}(s)-\eps^2\int_0^t\int_0^\eta \left(\sum_{k=0}^{n}\binom{n}{k}(\partial^k_s K_\eps) \, \partial_s^{n-k}f_\eps\right)(s,\xi)\,\dd\xi \, \dd\eta,
 \end{equation*}
which implies that
 \begin{equation*}
  \left(1-\eps^2\|K\|_\infty\right)\|\partial_s^n f_\eps\|_{L^\infty(\domain)}
  \leq\eps\left\|\frac{\dd^n \kappa}{\dd s^n}\right\|_\infty
  +\frac{1}{2}\eps^2\sum_{k=1}^{n}\binom{n}{k}
  \|\partial_s^k K\|_{L^\infty(\domain_{\tilde\eps})}
  \|\partial_s^{n-k}f_\eps\|_{L^\infty(\domain)}.
 \end{equation*}
Again, by induction, one easily verifies that
 \begin{equation*}
  \|\partial_s^n f_\eps\|_{L^\infty(\domain)}
  =\mathcal{O}(\eps).
 \end{equation*}
In a similar vein one derives that ($m\in\N$)
 \begin{equation*}
   \|\partial_s^n\partial_t^m f_\eps\|_{L^\infty(\domain)}
  =\mathcal{O}(\eps^m)\,.
 \end{equation*}
 
Finally, for any $a\in\R$ and $i\in\{s,t\}$, $\partial_i f_\eps^{a}=a f_\eps^{a-1}\partial_i f_\eps$. Hence we have
 \begin{equation*}
 \|\partial_i f_\eps^{a}\|_{L^\infty(\domain)}
  =\mathcal{O}(\eps) 
 \end{equation*}
and similarly for the higher derivatives.
We summarise our findings in
\begin{proposition}\label{prop:f_asy} 
 For all $a\in\R$,
 \begin{align*}
  &f_\eps^a=1+\mathcal{O}(\eps)\,\text{ uniformly on } \domain,\\
  & \|\partial_s^n\partial_t^m f_\eps^a\|_{L^\infty(\domain)}
  =\begin{cases}\mathcal{O}(\eps)&\text{ for } m=0\text{ and }n\in\N \,, \\
    \mathcal{O}(\eps^m)&\text{ for } m\in\N\text{ and }n\in\N\cup\{0\}\,,
   \end{cases}
 \end{align*}
 as $\eps\to 0$.
\end{proposition}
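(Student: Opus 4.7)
The plan is to split the proposition into its three claims, reuse the integral-equation machinery already set up in the preceding paragraphs for the case $a=1$, and then extend from $f_\eps$ to $f_\eps^a$ by the chain rule.

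First I would dispatch the uniform bound $f_\eps^a = 1 + \mathcal{O}(\eps)$. Inequality \eqref{eq:f_bound} gives $|f_\eps - 1| \le C_\eps = \mathcal{O}(\eps)$ uniformly on $\domain$, so for $\eps$ below a threshold depending on $\|\kappa\|_\infty$ and $\|K\|_\infty$ the function $f_\eps$ takes values in a compact subinterval of $(0,\infty)$ on which $x\mapsto x^a$ is Lipschitz. A mean-value estimate then yields $|f_\eps^a - 1| \le M_a |f_\eps - 1| = \mathcal{O}(\eps)$ uniformly, with $M_a$ depending only on $a$ and the same geometric norms. The derivative estimates in the case $a=1$, namely $\|\partial_t^n f_\eps\|_{L^\infty(\domain)} = \mathcal{O}(\eps^n)$ for $n\ge 1$, $\|\partial_s^n f_\eps\|_{L^\infty(\domain)} = \mathcal{O}(\eps)$, and the mixed bounds $\|\partial_s^n \partial_t^m f_\eps\|_{L^\infty(\domain)} = \mathcal{O}(\eps^m)$ for $m\ge 1$, $n\ge 0$, have already been derived in the displays preceding the proposition via \eqref{eq:f_int}, \eqref{eq:f_der_t} and successive inductions, so I would simply invoke them.

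It remains to pass from $a=1$ to arbitrary $a \in \R$. I would induct on the total order $n+m$: at order one the identity $\partial_i f_\eps^a = a f_\eps^{a-1} \partial_i f_\eps$ transfers the $\mathcal{O}(\eps)$ bound from $\partial_i f_\eps$ directly, using the uniform boundedness of $f_\eps^{a-1}$ from the first step. Iterating this identity together with the Leibniz rule produces, at order $n+m$, a finite linear combination of terms of the form
\[
  f_\eps^{a-k} \prod_{j=1}^p \bigl(\partial_s^{n_j} \partial_t^{m_j} f_\eps\bigr),
\]
with $\sum_j n_j = n$, $\sum_j m_j = m$, each $n_j + m_j \ge 1$, and $1 \le p \le n+m$. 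Each inner factor is $\mathcal{O}(\eps^{m_j})$ when $m_j \ge 1$, and $\mathcal{O}(\eps)$ when $m_j = 0$ with $n_j \ge 1$, by the $a=1$ estimates. Counting exponents, when $m \ge 1$ the $t$-derivatives alone supply $\eps^{\sum m_j} = \eps^m$ (extra $s$-derivative factors only improve the bound), and when $m = 0$ and $n \ge 1$ at least one factor still carries a power of $\eps$, giving $\mathcal{O}(\eps)$.

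The main obstacle: honestly there is none substantial. The asymptotic behaviour of $f_\eps$ and its derivatives is already at hand from the Jacobi equation \eqref{eq:Jacobi} and its integral form, and the extension to arbitrary real $a$ is combinatorial book-keeping for the chain rule. The only mildly delicate point is verifying that in the expansion above one never picks up a bare $f_\eps^{a-k}$ factor with no derivative of $f_\eps$ attached, but this is automatic as soon as $n+m \ge 1$ because each application of $\partial_s$ or $\partial_t$ necessarily produces a derivative of $f_\eps$ somewhere in every term.
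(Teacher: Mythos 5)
Your proposal is correct and follows essentially the same route as the paper: establish the $a=1$ estimates from the integral form of the Jacobi equation by induction, then transfer to general $a$ via the chain rule $\partial_i f_\eps^a = a f_\eps^{a-1}\partial_i f_\eps$, using the uniform two-sided bound on $f_\eps$ to control the $f_\eps^{a-k}$ prefactors. The only difference is that you spell out the Fa\`a di Bruno--Leibniz combinatorics for the higher mixed derivatives, whereas the paper compresses this into the remark ``and similarly for the higher derivatives''; your exponent counting (every term at order $n+m\ge 1$ carries at least one derivative of $f_\eps$, giving $\mathcal{O}(\eps)$ for $m=0$ and $\mathcal{O}(\eps^{\sum m_j})=\mathcal{O}(\eps^m)$ for $m\ge 1$) is correct.
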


We will denote the product and norm of $L^2(\domain, f_\eps\,\dd s\,\dd t)$ 
and $L^2(\domain, \dd s \,\dd t)$ by $\langle\cdot,\cdot\rangle_\eps$ 
and $\|\cdot\|_\eps$ and by $\langle\cdot,\cdot\rangle$ 
and $\|\cdot\|$, respectively. 
We will always assume that $\eps\leq\tilde\eps$.
 Then these norms are equivalent, because for all $g\in L^2(\domain,\dd s \, \dd t)$,
 \begin{equation*}
  (1-C_\eps)\|g\|^2\leq\|g\|_\eps^2\leq(1+C_\eps)\|g\|^2,
 \end{equation*}
 where $C_\eps\to0$ as $\eps \to 0	$. 
In particular, for any $a\in\R$, $\|g\|_\eps=\mathcal{O}(\eps^{a})$ if and only if $\|g\|=\mathcal{O}(\eps^{a})$.

\section{Norm-resolvent convergence}\label{Sec.nrs}
%
 In this section we prove Theorem~\ref{theo:res_conv}.

 Since $H_0$ and $H_\eps$ are positive operators, $(H_0+1)$ and $(H_\eps+1)$ are boundedly invertible. For any $F,G\in L^2(\domain,\dd s \, \dd t)$, 
 we can find $\phi\in\dom{H_0}$ and $\psi\in\dom{H_\eps}$ such that
 \begin{equation}\label{eq:op_ran}
 (H_0+1)\phi=F,\qquad (H_\eps +1)\psi=U_{\eps}^{-1}G.
 \end{equation}
  Thus we have
 \begin{multline*}
 \langle F, U_\eps(H_\eps+1)^{-1}U_{\eps}^{-1}G\rangle-\langle F,(H_0 +1)^{-1} G\rangle\\
 =\langle (H_0+1)\phi,U_{\eps}\psi\rangle-\langle\phi,U_{\eps}(H_{\eps}+1)\psi\rangle
 =\langle \phi, H_0 U_{\eps}\psi\rangle-\langle U_{\eps}^{-1}\phi,H_{\eps}\psi\rangle _{\eps}\\
 =h_0(\phi, U_\eps\psi)-h_\eps (U_{\eps}^{-1}\phi,\psi),
 \end{multline*}
 where $h_\eps$ and $h_0$ are quadratic forms of $H_\eps$ and $H_0$, respectively, see \eqref{eq:form_eps} and \eqref{eq:form_0}.
 Hence to show \eqref{eq:res_conv} it is enough to prove that
 \begin{equation}\label{eq:eq_est}
  |h_0(\phi, U_{\eps}\psi)-h_\eps (U_{\eps}^{-1}\phi,\psi)|\leq C\eps\|F\| \|G\|
 \end{equation}
 for all $\phi,\psi\in W^{1,2}(\domain)$ satisfying \eqref{eq:op_ran}.
 
 Multiplying the first equation of \eqref{eq:op_ran} by $\phi$ 
with respect to the dot product of $L^2(\domain,\dd s \, \dd t)$, we obtain
 $$\int_\domain |\phi,_{s}|^2+\eps^{-2}\int_\domain|\phi,_{t}|^2+\int_{\domain}|\phi|^2=\int_{\domain}\bar{\phi}F\leq\|\phi\|\|F\|.$$
 Here and in the sequel, the comma denotes a partial derivative.
 This yields
 \begin{equation} \label{eq:phi_bounds}
  \|\phi\|\leq\|F\|,\quad \|\phi,_s\|\leq\|F\|,\quad \|\phi,_t\|\leq\eps\|F\|.
 \end{equation}
 Similarly from the second equation of \eqref{eq:op_ran} we get
 $$
\int_\domain f_{\eps}^{-1}|\psi,_{s}|^2+\eps^{-2}\int_\domain f_\eps|\psi,_{t}|^2+\int_{\domain} f_\eps|\psi|^2=\int_{\domain}f_{\eps}^{1/2}\bar{\psi}G
\leq\|f_{\eps}^{1/2}\|_{L^\infty(\domain)}\|\psi\|\|G\|.
$$
 Clearly, by \eqref{eq:f_bound},
 \begin{equation*}
  f_{\eps}^{-1}\geq (1+C_\eps)^{-1}, \quad f_{\eps}\geq 1-C_\eps,\quad 
  \|f_{\eps}^{1/2}\|_{L^\infty(\domain)}\leq (1+C_\eps)^{1/2}.
 \end{equation*}
 Thus we have
\begin{equation*}
  \|\psi\|\leq\frac{(1+C_\eps)^{1/2}}{(1-C_\eps)}\|G\|,
  \;\; \|\psi,_s\|\leq\frac{1+C_\eps}{(1-C_\eps)^{1/2}}\|G\|,
  \;\; \|\psi,_t\|\leq\frac{\eps (1+C_\eps)^{1/2}}{(1-C_\eps)}\|G\|.
 \end{equation*}
 We conclude that there exist $C_1,\,\eps_1>0$ such that, for all $\eps<\eps_1$, we have
 \begin{equation} \label{eq:psi_bounds}
  \|\psi\|\leq C_1\|G\|,\quad \|\psi,_s\|\leq C_1\|G\|,\quad \|\psi,_t\|\leq C_1\eps\|G\|.
 \end{equation}

 After a straightforward calculation one gets
 \begin{multline*}
  h(\phi,U_\eps\psi)-h_\eps(U_{\eps}^{-1}\phi,\psi)=\eps^{-2}\int_\domain (f_{\eps}^{1/2}),_t (\bar{\phi},_t \psi+\bar{\phi}\psi,_t)+\int_\domain(f_{\eps}^{1/2}),_s \bar{\phi},_s \psi\\-\int_\domain (f_{\eps}^{-1/2}),_s f_{\eps}^{-1}\bar{\phi}\psi,_s-\int_\domain (f_{\eps}^{-3/2}-f_{\eps}^{1/2})\bar{\phi},_s \psi,_s.
 \end{multline*}
 Since, by Proposition \ref{prop:f_asy},
 \begin{align*}
  (f_{\eps}^{1/2}),_t&=\mathcal{O}(\eps),
  & (f_{\eps}^{1/2}),_s&=\mathcal{O}(\eps), \\
  (f_{\eps}^{-1/2}),_sf_{\eps}^{-1}&=\mathcal{O}(\eps),
  & f_{\eps}^{-3/2}-f_{\eps}^{1/2}&=\mathcal{O}(\eps),
 \end{align*}
 uniformly on $\domain$ as $\eps\to 0$, 
we infer that there exist $C_2, \, \eps_2>0$ such that, 
for all $\eps<\eps_2$, 
 \begin{equation}\label{eq_inter_est}
  |h(\phi,U_\eps\psi)-h_\eps(U_{\eps}^{-1}\phi,\psi)-\eps^{-2}\int_\domain (f_{\eps}^{1/2}),_t (\bar{\phi},_t \psi+\bar{\phi}\psi,_t)|\leq C_2\eps\|F\|\|G\|.
 \end{equation}
 Here we used the Schwarz inequality together with \eqref{eq:phi_bounds} and \eqref{eq:psi_bounds}.
 
 Multiplying the first and the second equation of \eqref{eq:op_ran} by $\kappa t\psi$ and $\kappa t\phi$ with respect to the dot product of 
$L^2(\domain,\dd s \, \dd t)$ and $L^2(\domain,f_\eps\,\dd s\,\dd t)$, 
respectively, we obtain
 \begin{equation*}
 \int_\domain \kappa t \bar{\psi},_s\phi,_s+\int_\domain\frac{\dd\kappa}{\dd s}t\bar{\psi}\phi,_s+\eps^{-2}\int_\domain\kappa t\bar{\psi},_t\phi,_t+\eps^{-2}\int_\domain\kappa\bar{\psi}\phi,_t+\int_\domain\kappa t\bar{\psi}\phi
 =\int_\domain\kappa t\bar{\psi}F
 \end{equation*}
 and
 \begin{multline*}
  \int_\domain \kappa t f_{\eps}^{-1} \bar{\phi},_s\psi,_s+\int_\domain\frac{\dd \kappa}{\dd s}tf_{\eps}^{-1}\bar{\phi}\psi,_s+\eps^{-2}\int_\domain\kappa tf_{\eps}\bar{\phi},_t\psi,_t\\
  +\eps^{-2}\int_\domain\kappa f_{\eps}\bar{\phi}\psi,_t
  +\int_\domain\kappa t f_{\eps}\bar{\phi}\psi=\int_\domain\kappa t f_{\eps}^{1/2}\bar{\phi}G.
 \end{multline*}
 Adding the complex-conjugate of the first equation to the second equation we arrive at
 \begin{multline*}
  \!\!\int_\domain \kappa t (1+f_{\eps}^{-1})\bar{\phi},_s \psi,_s +\int_\domain \frac{\dd\kappa}{\dd s}t(\bar{\phi},_s \psi+f_{\eps}^{-1}\bar{\phi}\psi,_s)+\eps^{-2}\int_\domain \kappa t(\bar{\phi},_t \psi,_t+f_{\eps}\bar{\phi},_t \psi,_t)\\
  +\eps^{-2}\int_{\domain}\kappa (\bar{\phi},_t \psi+\bar{\phi}\psi,_t)+\eps^{-2}\int_\domain \kappa (f_\eps-1)\bar{\phi}\psi,_t+\int_\domain\kappa t(1+f_\eps)\bar{\phi}\psi\\
  =\int_\domain \kappa t \psi\bar{F}+\int_\domain \kappa t f_{\eps}^{1/2}\bar{\phi}G.
 \end{multline*}
 This means that for some $C_3,\,\eps_3>0$ we have 
 \begin{equation*}
  \left|
  \eps^{-2}\int_{\domain}\kappa (\bar{\phi},_t \psi+\bar{\phi}\psi,_t)
  \right|
  \leq C_3\|F\|\|G\|,
 \end{equation*}
 whenever $\eps<\eps_3$.
 
 Putting this estimate together with \eqref{eq_inter_est} and the observation that 
 $$(f_\eps^{1/2})_{,t}=-\frac{1}{2}\eps\kappa+\mathcal{O}(\eps^2)$$ 
 (see \eqref{eq:f_der_t}), we arrive at \eqref{eq:eq_est}.
 This concludes the proof of Theorem~\ref{theo:res_conv}.
\hfill\qed

\begin{remark}
Inspecting the proof above and recalling Remark~\ref{Rem.dependence},
the dependence of the constants~$C$ and~$\eps_0$ 
on the geometry of~$\Gamma$ and~$\mathcal{A}$ in Theorem~\ref{theo:res_conv}
is only through the supremum norms 
$\|\kappa\|_\infty$, $\|\kappa'\|_\infty$,
$\|K\|_\infty$, and 
$\|\partial_s K\|_{L^\infty((0,L)\times(-\tilde\eps,\tilde\eps))}$.
\end{remark}
%

\section{Convergence of eigenfunctions}\label{Sec.efs}
%
The eigenpairs of $H_0$ are of the form $(\lambda_n^0+\nu_m,\,u_n^0\otimes\chi_m)$, $n\in\N,\,m\in\N\cup\{0\}$, where
 \begin{equation*}
  \lambda_n^0\:=\left(\frac{(n-1)\pi}{L}\right)^2,
  \quad \nu_m\:=\left(\frac{m\pi}{2\eps}\right)^2,
 \end{equation*}
are eigenvalues of one-dimensional Neumann problems
after a separation of variables
and
 \begin{equation*}
  u_n^0(s) := \begin{cases}
              \frac{1}{\sqrt{L}}&\text{ for }n=1 , \\
              \sqrt{\frac{2}{L}}\cos{\frac{(n-1)\pi s}{L}}
&\text{ for }n \geq 2,
            \end{cases} 
 \;
 \chi_m(t) := \begin{cases}
              \frac{1}{\sqrt{2}}&\text{ for }m=0 , \\
              \cos{\frac{m\pi t}{2}}&\text{ for }m\geq 2\text{ even} , \\
              \sin{\frac{m\pi t}{2}}&\text{ for }m\text{ odd,}
             \end{cases}
 \end{equation*}
are the corresponding normalised eigenfunctions.
Take $N\in\N$. If we rearrange the eigenvalues in non-decreasing order then, for all $\eps$ such that
\begin{equation}\label{rearrange}
  \eps<\frac{L}{2(N-1)},
\end{equation}
the first $N$ eigenpairs are just $(\lambda_n^0,\,\psi_n^0)$, where 
 $$\psi_n^0 := u_n^0\otimes\chi_0=u_n^0\otimes \frac{1}{\sqrt{2}}\qquad (n=1,2,\ldots,N).$$
Note that these eigenvalues are simple and  all $\psi_n^0$ are normalised 
to one in $L^2(\domain,\dd s \, \dd t)$.

\subsection{$L^2$-convergence}
Recall that $(\lambda_n^\eps,\,\psi_n^\eps)$, $n\in\N$, 
stands for the $n$th eigenpair of~$H_\eps$. 
As a consequence of Theorem~\ref{theo:res_conv}, 
$\psi_n^\eps$ may be chosen in such a way 
that $\|U_\eps\psi_n^\eps-\psi_n^0\|=\mathcal{O}(\eps)$,
which in a view of Proposition~\ref{prop:f_asy} implies that 
(\cf~\eqref{L2-conv})
\begin{equation*}
 \|\psi_n^\eps-\psi_n^0\|=\mathcal{O}(\eps).
\end{equation*}
We also have (\cf~\eqref{ev-conv})
\begin{equation} \label{eq:ev_conv}
 |\lambda_n^\eps-\lambda_n^0|=\mathcal{O}(\eps).
\end{equation}
These results are trivial for $n=1$, 
because the first Neumann eigenpair is always 
formed by a zero eigenvalue and a constant eigenfunction.
Therefore, we will omit this case in our considerations below.

\subsection{$W^{1,2}$-convergence}
For any $n=2,3,\ldots,N$, put
\begin{equation*}
 \psi:=\psi_n^\eps-\psi_n^0.
\end{equation*}
Recall that we have 
\begin{equation} \label{eq:ef_conv}
 \|\psi\|=\mathcal{O}(\eps)
\end{equation}
as $\eps\to 0$.
Subtracting the eigenvalue equations for $\psi_n^\eps$ and $\psi_n^0$ and taking into account that $\partial_t \psi_n^0=0$, we infer that
\begin{equation}\label{eq:master}
 -f_{\eps}^{-1}\partial_s f_{\eps}^{-1}\partial_s\psi-\eps^{-2}f_{\eps}^{-1}\partial_t f_{\eps}\partial_t \psi=\alpha,
\end{equation}
where
\begin{equation*}
 \alpha:=\lambda_n^\eps \psi-(\lambda_n^0-\lambda_n^\eps)\psi_n^0+(f_{\eps}^{-2}-1)\partial_{s}^{2}\psi_n^0+f_{\eps}^{-1}(f_\eps^{-1})_{,s}\partial_s\psi_n^0.
\end{equation*}
With the help of \eqref{eq:ev_conv}, \eqref{eq:ef_conv}, and Proposition \ref{prop:f_asy} we deduce that
\begin{equation} \label{eq:alpha_conv}
 \|\alpha\|=\mathcal{O}(\eps)
\end{equation}
as $\eps\to 0$.

Multiplying the both sides of \eqref{eq:master} by $\psi$ with respect to the dot product of $L^2(\domain,f_{\eps}\dd s\,\dd t)$ and employing the Schwarz inequality on the right-hand side of the result, we arrive at
\begin{equation*}
\langle f_{\eps}^{-1}\partial_s\psi,\partial_s\psi\rangle+\eps^{-2}\langle f_{\eps}\partial_t\psi,\partial_t\psi\rangle= \langle\alpha,f_\eps\psi\rangle\leq (1+C_\eps)\|\alpha\|\|\psi\|=\mathcal{O}(\eps^2).
\end{equation*}
This yields
\begin{equation} \label{eq:ef_der_conv}
 \|\partial_s\psi\|=\mathcal{O}(\eps),\quad \|\partial_t\psi\|=\mathcal{O}(\eps^2)
\end{equation}
as $\eps\to 0$.

\subsection{$W^{2,2}$-convergence} \label{sec:W2}
Multiplying the both sides of \eqref{eq:master} by $(-\partial_{s}^{2}\psi)$ with respect to the dot product of $L^2(\domain,f_{\eps}\dd s\,\dd t)$, we obtain
\begin{multline}\label{eq:master_2}
 \!\!\!\!\!\langle f_{\eps}^{-1}\partial_{s}^{2}\psi,\partial_{s}^{2}\psi\rangle+\langle (f_{\eps}^{-1})_{,s}\partial_s\psi,\partial_{s}^{2}\psi\rangle+\eps^{-2}\langle f_{\eps}\partial_s\partial_t\psi,\partial_s\partial_t\psi\rangle+\eps^{-2}\langle (f_\eps)_{,s}\partial_t\psi,\partial_t\partial_s\psi\rangle\\
 =-\langle f_\eps\alpha,\partial_s^2\psi\rangle=\langle(f_\eps)_{,s}\alpha,\partial_s\psi\rangle+\langle f_\eps \alpha_{,s},\partial_s\psi\rangle\\
 \leq \|(f_\eps)_{,s}\|_{L^\infty(\domain)} \|\alpha\|\|\partial_s\psi\|
  +\|f_\eps\|_{L^\infty(\domain)} \|\alpha_{,s}\|\|\partial_s\psi\|.
\end{multline}
On the left-hand side we have integrated twice by parts,
$$\langle\partial_t f_\eps \partial_t\psi,\partial_{s}^{2}\psi\rangle=\langle\partial_s f_\eps \partial_t\psi,\partial_t\partial_{s}\psi\rangle.$$
By the Schwarz and Young inequalities, we have
\begin{align*}
 &|\langle(f_{\eps}^{-1})_{,s}\partial_s\psi,\partial_{s}^{2}\psi\rangle|
 \leq \frac{1}{2}\|(f_\eps^{-1})_{,s}\|_{L^\infty(\domain)}
  (\|\partial_s\psi\|^2+\|\partial_{s}^{2}\psi\|^2),
\\
 &|\langle (f_\eps)_{,s}\partial_t\psi,\partial_t\partial_s\psi\rangle|
  \leq \|(f_\eps)_{,s}\|_{L^\infty(\domain)}^2 \|\partial_t\psi\|^2
  +\frac{1}{4}\|\partial_t\partial_s\psi\|^2.
\end{align*}
Now we may further estimate in \eqref{eq:master_2},
\begin{multline*}
 (1+C_\eps)^{-1}\|\partial_{s}^{2}\psi\|^2
-\frac{1}{2}\|(f_\eps^{-1})_{,s}\|_{L^\infty(\domain)}
\|\partial_{s}^{2}\psi\|^2\\
+\eps^{-2}(1-C_\eps)\|\partial_s\partial_t\psi\|^2
-\frac{1}{4\eps^2}\|\partial_s\partial_t\psi\|^2 \\
 \leq \frac{1}{2}\|(f_\eps^{-1})_{,s}\|_{L^\infty(\domain)}
\|\partial_s\psi\|^2+\eps^{-2}\|(f_\eps)_{,s}\|_{L^\infty(\domain)}^2
\|\partial_t\psi\|^2\\
 +\|(f_\eps)_{,s}\|_{L^\infty(\domain)} \|\alpha\|\|\partial_s\psi\|
+\|f_\eps\|_{L^\infty(\domain)} \|\alpha_{,s}\|\|\partial_s\psi\|.
\end{multline*}

Observe that, due to \eqref{eq:ef_der_conv}, \eqref{eq:ev_conv}, \eqref{eq:ef_conv}, and Proposition \ref{prop:f_asy},
\begin{equation} \label{eq:alpha_der_conv}
 \|\partial_i\alpha\|=\mathcal{O}(\eps)\qquad (i=s,t)
\end{equation}
as $\eps\to 0$.
Taking this together with the previous convergence results into the account, we infer that
$$\frac{1}{2}\|\partial_{s}^{2}\psi\|^2+\frac{1}{2\eps^2}\|\partial_s\partial_t\psi\|^2=\mathcal{O}(\eps^2),$$
from which we   conclude that
\begin{equation*}
 \|\partial_{s}^{2}\psi\|=\mathcal{O}(\eps),\quad \|\partial_s\partial_t\psi\|=\mathcal{O}(\eps^2)
\end{equation*}
as $\eps\to 0$.

Finally, \eqref{eq:master} yields
\begin{equation*}
 -\eps^{-2}\partial_{t}^{2}\psi=\alpha+\eps^{-2}f_{\eps}^{-1}(f_{\eps})_{,t}\partial_t\psi+f_{\eps}^{-1}(f_{\eps}^{-1})_{,s}\partial_s\psi+f_\eps^{-2}\partial_s^2\psi.
 \end{equation*}
Taking the norm of the both sides we arrive at
\begin{equation*}
 \|\partial_{t}^{2}\psi\|=\mathcal{O}(\eps^3).
\end{equation*}

\subsection{$W^{3,2}$-convergence}
Due to \eqref{eq:alpha_conv}, \eqref{eq:alpha_der_conv}, and Proposition \ref{prop:f_asy}, we also have
\begin{equation} \label{eq:alpha_der_conv_2}
 \|f_\eps \alpha\|=\mathcal{O}(\eps),\quad \|\partial_i(f_\eps \alpha)\|=\mathcal{O}(\eps)\quad (i=s,t)
\end{equation}
as $\eps\to 0$. Multiplying \eqref{eq:master} by $f_\eps$ and taking the partial derivative of the result with respect to~$s$, we obtain
\begin{equation}\label{eq:master_3}
 -f_\eps^{-1}\partial_s^3\psi-\eps^{-2}f_\eps\partial_s\partial_t^2\psi=\beta,
\end{equation}
where 
\begin{equation*}
 \beta:=\partial_s(f_\eps \alpha)+2(f_\eps^{-1}),_{s}\partial_s^2\psi+(f_\eps^{-1}),_{ss}\partial_s\psi
 +\eps^{-2}\left((f_\eps),_s\partial_t^2\psi+(f_\eps),_{st}\partial_t\psi+(f_\eps),_t\partial_s\partial_t\psi\right).
\end{equation*}
Using \eqref{eq:alpha_der_conv_2} together with Proposition \ref{prop:f_asy} and the convergence results so far derived for the derivatives of $\psi$, one easily infers that
\begin{equation*}
\|\beta\|=\mathcal{O}(\eps),\quad \|f_\eps^{-1}\beta\|=\mathcal{O}(\eps)
\end{equation*}
as $\eps\to 0$.

Next, multiply \eqref{eq:master_3} by $f_\eps^{-1}$ and take the dot  product on $L^2(\domain,\,\dd s\,\dd t)$ of the both sides of the result with $(-\partial_s^3\psi)$. In this way, we arrive at 
\begin{equation} \label{eq:master_3_mult}
 \langle f_\eps^{-2}\partial_s^3\psi,\partial_s^3\psi\rangle+\eps^{-2}\langle\partial_s\partial_t^2\psi,\partial_s^3\psi\rangle=\langle f_\eps^{-1}\beta,\partial_s^3\psi\rangle.
\end{equation}
In the second term, we may integrate by parts as follows
\begin{equation*}
 \langle\partial_s\partial_t^2\psi,\partial_s^3\psi\rangle=-\langle\partial_s\partial_t\psi,\partial_t\partial_s^3\psi\rangle=\langle\partial_s^2\partial_t\psi,\partial_t\partial_s^2\psi\rangle.
\end{equation*}
Moreover, $(1+C_\eps)^{-2}\|\partial_s^3\psi\|^2\leq\langle f_\eps^{-2}\partial_s^3\psi,\partial_s^3\psi\rangle$.
Therefore, we may estimate in \eqref{eq:master_3_mult} as follows
\begin{equation*}
  (1+C_\eps)^{-2}\|\partial_s^3\psi\|^2+\eps^{-2}\|\partial_s^2\partial_t\psi\|^2\leq \langle f_\eps^{-1}\beta,\partial_s^3\psi\rangle
  \leq \|f_\eps^{-1}\beta\|\|\partial_s^3\psi\|
  \leq \frac{1}{2}\left(\|f_\eps^{-1}\beta\|^2+\|\partial_s^3\psi\|^2 \right).
\end{equation*}
Hence, we have
\begin{equation*}
 \left((1+C_\eps)^{-2}-\frac{1}{2}\right)\|\partial_s^3\psi\|^2+\eps^{-2}\|\partial_s^2\partial_t\psi\|^2\leq \frac{1}{2}\|f_\eps^{-1}\beta\|^2=\mathcal{O}(\eps^2).
\end{equation*}
We conclude that
\begin{equation*}
 \|\partial_s^3\psi\|=\mathcal{O}(\eps),\quad \|\partial_s^2\partial_t\psi\|=\mathcal{O}(\eps^2)
 \end{equation*}
as $\eps\to 0$.

Taking the norm of \eqref{eq:master_3} multiplied by $f_\eps^{-1}$ we immediately obtain
\begin{equation*}
 \eps^{-2}\|\partial_s\partial_t^2\psi\|\leq\|f_\eps^{-2}\partial_s^3\psi\|+\|f_\eps^{-1}\beta\|=\mathcal{O}(\eps).
\end{equation*}
Therefore,
\begin{equation*}
 \|\partial_s\partial_t^2\psi\|=\mathcal{O}(\eps^3)
\end{equation*}
as $\eps\to 0$. Finally, multiplying \eqref{eq:master} by $f_\eps$ and taking the partial derivative of the result with respect to $t$, we obtain
\begin{multline*}
 -(f_\eps^{-1}),_t\partial_s^2\psi-f_\eps^{-1}\partial_t\partial_s^2\psi-(f_\eps^{-1}),_{st}\partial_s\psi-(f_\eps^{-1}),_s\partial_t\partial_s\psi\\
 -\eps^{-2}\left(2(f_\eps),_t\partial_t^2\psi+f_\eps \partial_t^3\psi+(f_\eps)_{,tt}\partial_t\psi\right)=\partial_t(f_\eps\beta).
\end{multline*}
Expressing $\partial_t^3\psi$ from this equation, taking the norm of the both sides of the resulting formula, and using the triangle inequality together with the previous convergence results, we infer that
\begin{equation*}
 \|\partial_t^3\psi\|=\mathcal{O}(\eps^3)
\end{equation*}
as $\eps\to 0$.

\subsection{$C^{1,\gamma}$-convergence}
By the general Sobolev inequality, we have
\begin{corollary}\label{cor:Holder}
For any $\gamma\in(0,1)$,
there exists a positive constant~$C_\gamma$ 
depending on~$\gamma$ and~$L$ only such that
 \begin{equation*}
  \|\psi\|_{C^{1,\gamma}(\overline{\domain})}
  \leq C_\gamma\|\psi\|_{W^{3,2}(\domain)}=\mathcal{O}(\eps).
 \end{equation*}
 In particular,
 \begin{equation}\label{eq:unif_conv}
  \|\psi\|_{L^\infty(\domain)}
  =\mathcal{O}(\eps),\quad 
  \|\partial_s \psi\|_{L^\infty(\domain)}=\mathcal{O}(\eps)
 \end{equation}
 as $\eps\to 0$.
\end{corollary}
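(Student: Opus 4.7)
The statement is essentially a corollary of the Sobolev embedding theorem combined with the $W^{3,2}$-estimates painstakingly assembled in the preceding subsections. The plan is therefore very short: invoke the embedding, then simply collect the previously derived bounds.

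First, I would recall that $\domain = (0,L) \times (-1,1)$ is a bounded Lipschitz (in fact smooth) domain in $\R^2$. For such a domain, the general Sobolev embedding theorem yields a continuous inclusion $W^{3,2}(\domain) \hookrightarrow C^{1,\gamma}(\overline{\domain})$ for every $\gamma \in (0,1)$; the embedding constant $C_\gamma$ depends only on $\gamma$ and the geometry of~$\domain$ (hence only on~$\gamma$ and~$L$, since the $t$-range is fixed to $(-1,1)$). This immediately gives the first inequality
$$
  \|\psi\|_{C^{1,\gamma}(\overline{\domain})}
  \leq C_\gamma \, \|\psi\|_{W^{3,2}(\domain)}.
$$

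Second, I would combine the nine $L^2$-estimates obtained successively in Subsections on $L^2$-, $W^{1,2}$-, $W^{2,2}$- and $W^{3,2}$-convergence, namely $\|\psi\|=\mathcal{O}(\eps)$, $\|\partial_s\psi\|=\mathcal{O}(\eps)$, $\|\partial_t\psi\|=\mathcal{O}(\eps^2)$, $\|\partial_s^2\psi\|=\mathcal{O}(\eps)$, $\|\partial_s\partial_t\psi\|=\mathcal{O}(\eps^2)$, $\|\partial_t^2\psi\|=\mathcal{O}(\eps^3)$, $\|\partial_s^3\psi\|=\mathcal{O}(\eps)$, $\|\partial_s^2\partial_t\psi\|=\mathcal{O}(\eps^2)$, $\|\partial_s\partial_t^2\psi\|=\mathcal{O}(\eps^3)$, and $\|\partial_t^3\psi\|=\mathcal{O}(\eps^3)$. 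Adding these (and noting that the worst, i.e.~slowest, rate is~$\mathcal{O}(\eps)$) yields $\|\psi\|_{W^{3,2}(\domain)} = \mathcal{O}(\eps)$, which completes the proof of the main inequality.

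Third, the two displayed consequences are immediate: $\|\psi\|_{L^\infty(\domain)} \leq \|\psi\|_{C^{1,\gamma}(\overline{\domain})}$ and $\|\partial_s\psi\|_{L^\infty(\domain)} \leq \|\psi\|_{C^{1,\gamma}(\overline{\domain})}$, so both are $\mathcal{O}(\eps)$ as $\eps \to 0$. There is no serious obstacle here; the entire content of the corollary has already been prepared by the $W^{k,2}$-bootstrap, and the only role of this statement is to upgrade $L^2$-smallness to uniform smallness of~$\psi$ and~$\partial_s\psi$, which is precisely what is needed to locate the extremal points of~$\psi_n^\eps$ near those of~$\psi_n^0$ in the subsequent Step~IV.
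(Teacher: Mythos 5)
Your proof is correct and takes essentially the same approach as the paper: the corollary is stated there with only the remark ``By the general Sobolev inequality, we have,'' which is exactly the embedding $W^{3,2}(\domain) \hookrightarrow C^{1,\gamma}(\overline{\domain})$ (valid for the planar Lipschitz domain $\domain$ since $3 - \tfrac{2}{2} = 2 > 1 + \gamma$ for every $\gamma \in (0,1)$) combined with the previously established $\mathcal{O}(\eps)$ bounds on all derivatives of $\psi$ up to order three. The only slip is cosmetic: you announce ``nine'' $L^2$-estimates but then correctly list ten.
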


\subsection{$W^{4,2}$-bound}
Taking the partial derivative of \eqref{eq:master_3} with respect to $t$ and multiplying the result by $f_\eps^{-1}$  we obtain
\begin{equation}\label{eq:master_4}
 -f_\eps^{-2}\partial_s^3\partial_t\psi-\eps^{-2}\partial_s\partial_t^3\psi=\zeta,
\end{equation}
where
\begin{equation*}
 \zeta:=f_\eps^{-1}\left((\beta)_{,t}+(f_\eps^{-1})_{,t}\partial_s^3\psi+\eps^{-2}(f_\eps)_{,t}\partial_s\partial_t^2\psi\right).
\end{equation*}
Using the previous convergence results we infer that
\begin{equation} \label{eq:gamma_conv}
\|\zeta\|=\mathcal{O}(\eps)
\end{equation}
as $\eps\to 0$.

Next, we multiply both sides of equation~\eqref{eq:master_4} 
by $(-\partial_s^3\partial_t\psi)$ with respect to the dot product on $L^2(\domain,\dd s \, \dd t)$,
$$\langle f_\eps^{-2}\partial_s^3\partial_t\psi,\partial_s^3\partial_t\psi\rangle+\eps^{-2}\langle\partial_s\partial_t^3\psi,\partial_s^3\partial_t\psi\rangle=\langle\zeta,-\partial_s^3\partial_t\psi\rangle. $$
Using the three relations
\begin{align*}
 &(1+C_\eps)^{-2}\|\partial_s^3\partial_t\psi\|^2 \leq \langle f_\eps^{-2}\partial_s^3\partial_t\psi,\partial_s^3\partial_t\psi\rangle , \\
 &\langle\partial_s\partial_t^3\psi,\partial_s^3\partial_t\psi\rangle=-\langle\partial_s^2\partial_t^3\psi,\partial_s^2\partial_t\psi\rangle=\|\partial_s^2\partial_t^2\psi\|^2, \\
 &\langle\zeta,-\partial_s^3\partial_t\psi\rangle=|\langle\zeta,-\partial_s^3\partial_t\psi\rangle|\leq \|\zeta\|^2+\frac{1}{4}\|\partial_s^3\partial_t\psi\|^2,
\end{align*}
we arrive at
\begin{equation*}
\left((1+C_\eps)^{-2}-\frac{1}{4}\right) \|\partial_s^3\partial_t\psi\|^2+\eps^{-2}\|\partial_s^2\partial_t^2\psi\|^2\leq\|\zeta\|^2,
\end{equation*}
which yields
\begin{equation*}
 \|\partial_s^3\partial_t\psi\|=\mathcal{O}(\eps),\quad 
 \|\partial_s^2\partial_t^2\psi\|=\mathcal{O}(\eps^2),
\end{equation*}
as $\eps\to 0$.
If we express $\partial_s\partial_t^3\psi$ from \eqref{eq:master_4} and use the first result together with \eqref{eq:gamma_conv}, we obtain
\begin{equation*}
 \|\partial_s\partial_t^3\psi\|=\mathcal{O}(\eps^3).
\end{equation*}

Now, taking the partial derivative of \eqref{eq:master_3} with respect to $s$, in a similar vein as above, we arrive at
\begin{equation} \label{eq:master_5}
 -f_\eps^{-2}\partial_s^4\psi-\eps^{-2}\partial_s^2\partial_t^2\psi=f_\eps^{-1}\left(\beta_{,s}+(f_\eps^{-1})_{,s}\partial_s^3\psi+\eps^{-2}(f_\eps)_{,s}\partial_s\partial_t^2\psi\right)=:\tilde\zeta,
\end{equation}
where
$$\|\tilde\zeta\|=\mathcal{O}(\eps)$$
as $\eps\to 0$. However, multiplying \eqref{eq:master_5} by $\partial_s^4\psi$ does not make so much use, because there remains a boundary term after integration by parts,
$$
  \langle\partial_s^2\partial_t^2\psi,\partial_s^4\psi\rangle=\|\partial_s^3\partial_t\psi\|^2-\int_{\{0,L\}\times (-1,1)}
  (\partial_s^2\partial_t\psi) \,
  \partial_s^3\partial_t\psi,
$$
which may be controlled using the trace theorem only 
by $W^{1,2}$-norms of $\partial_s^2\partial_t\psi$ 
and $\partial_s^3\partial_t\psi$. Instead of it, one can  estimate $\langle\partial_s^2\partial_t^2\psi,\partial_s^4\psi\rangle$ using the Schwarz and Young inequalities. In this way, we arrive at
$$\|\partial_s^4\psi\|=\mathcal{O}(1).$$
Remark that one can obtain the same result even more directly 
just by taking the $L^2$-norms in~\eqref{eq:master_5}.

Finally, taking the $L^2$-norms 
in the second derivative of \eqref{eq:master} with respect to~$t$, 
we get
$$\|\partial_t^4\psi\|=\mathcal{O}(\eps^3)$$
(for flat strips, \ie~$K=0$, 
we get a better result $\|\partial_t^4\psi\|=\mathcal{O}(\eps^4)$).
We conclude that
\begin{equation*}
 \|D^4\psi\|=\mathcal{O}(1)
\end{equation*}
as $\eps\to 0$.

\subsection{$C^{2,\gamma}$-convergence}
By the general Sobolev inequality, we have

\begin{corollary}\label{cor:Sobolev2}
For any $\gamma\in(0,1)$,
there exists a positive constant~$\tilde{C}_\gamma$ 
depending on~$\gamma$ and~$L$ only such that
 \begin{equation*}
  \|\psi\|_{C^{2,\gamma}(\overline{\domain})}
  \leq \tilde{C}_\gamma\|\psi\|_{W^{4,2}(\domain)}
  =\mathcal{O}(1).
 \end{equation*}
\end{corollary}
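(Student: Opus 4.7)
The plan is to invoke the general Sobolev embedding theorem for the fixed (that is, $\eps$-independent) Lipschitz rectangle $\domain = (0,L)\times(-1,1)$ and to combine it with the $W^{4,2}$-bound established in the previous subsection. Since $\domain \subset \R^2$, the Sobolev embedding reads $W^{k,p}(\domain) \hookrightarrow C^{m,\gamma}(\overline{\domain})$ whenever $k - 2/p > m + \gamma$ (with $\gamma \in (0,1)$). Taking $k=4$, $p=2$, $m=2$ gives the numerical inequality $4 - 1 = 3 > 2 + \gamma$, which is satisfied for every $\gamma \in (0,1)$. Thus the desired embedding $W^{4,2}(\domain) \hookrightarrow C^{2,\gamma}(\overline{\domain})$ holds, yielding a continuous injection with an embedding constant $\tilde{C}_\gamma$ that depends only on $\gamma$ and the geometry of $\domain$, i.e.\ on $\gamma$ and $L$ alone.

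Applying this embedding to $\psi = \psi_n^\eps - \psi_n^0$ immediately gives the inequality
\begin{equation*}
  \|\psi\|_{C^{2,\gamma}(\overline{\domain})} \leq \tilde{C}_\gamma \|\psi\|_{W^{4,2}(\domain)}.
\end{equation*}
It then remains to invoke the $\eps$-independent bounds collected in the previous subsection. There we established that all partial derivatives $\partial_s^n\partial_t^m\psi$ with $n+m\leq 4$ have $L^2(\domain)$-norms of order $\mathcal{O}(1)$ or better (with actually much sharper decay for derivatives in $t$); combining them with the lower-order results $\|\psi\|=\mathcal{O}(\eps)$, $\|\partial_s\psi\|=\mathcal{O}(\eps)$, $\|\partial_t\psi\|=\mathcal{O}(\eps^2)$, etc., gives $\|\psi\|_{W^{4,2}(\domain)} = \mathcal{O}(1)$ as $\eps \to 0$. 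Putting these two facts together yields the conclusion of the corollary.

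The proof is essentially a bookkeeping exercise rather than a genuine argument, and there is no real obstacle: the only subtlety worth flagging is that the constant $\tilde{C}_\gamma$ does not blow up with $\eps$, which is automatic precisely because the reference domain $\domain$ in which we work (after Fermi-coordinate unfolding) is $\eps$-independent, so the Sobolev embedding constant depends only on $\gamma$ and $L$. Had we worked directly in $\Omega_\eps$ with an $\eps$-dependent metric $G_\eps$, we would have had to monitor the dependence of the embedding constant on $\eps$; the clean formulation in the flat reference manifold $\domain$ sidesteps this issue entirely.
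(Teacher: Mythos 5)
Your proposal is correct and follows exactly the paper's own one-line argument: the paper simply invokes ``the general Sobolev inequality'' to obtain $W^{4,2}(\domain)\hookrightarrow C^{2,\gamma}(\overline\domain)$ and combines it with the $W^{4,2}$-bounds of the preceding subsection. Your explicit verification of the embedding exponent condition and your remark that the constant is $\eps$-independent because the reference domain $\domain$ is fixed are both accurate and merely make the paper's terse proof explicit.
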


The following observation is essentially due to the referee.
\begin{lemma}\label{lem:ref}
 Let $\Omega$ be a bounded open subset of $\R^d$ with Lipschitz boundary.  
 If the family $\{\phi_\varepsilon\}_{\varepsilon>0}$ is a bounded set in $C^{0,\gamma}(\overline\Omega)$ and $\|\phi_\varepsilon\|_{L^2(\Omega)}=\mathcal{O}(\varepsilon)$ as $\varepsilon\to 0$ then 
$$
  \|\phi_\eps\|_{C^0(\overline\Omega)}
  =\mathcal{O}(\varepsilon^b)\;\text{as } \varepsilon\to 0 
  \text{ with } b:=\frac{2}{2+\frac{d}{\gamma}}.
$$ 
\end{lemma}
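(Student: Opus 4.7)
The plan is to exploit the tension between the smallness of the $L^2$-norm of $\phi_\varepsilon$ and the fact that Hölder continuity prevents $\phi_\varepsilon$ from being simultaneously very large at one point and very small on a neighbourhood of that point. Concretely, let $M_\varepsilon := \|\phi_\varepsilon\|_{C^0(\overline\Omega)}$ and pick $x_\varepsilon \in \overline\Omega$ with $|\phi_\varepsilon(x_\varepsilon)| = M_\varepsilon$. By the assumed boundedness in $C^{0,\gamma}$, there is a constant $K$ independent of $\varepsilon$ with $|\phi_\varepsilon(x) - \phi_\varepsilon(x_\varepsilon)| \leq K |x - x_\varepsilon|^\gamma$ for all $x \in \overline\Omega$. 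Hence $|\phi_\varepsilon(x)| \geq M_\varepsilon/2$ whenever $|x - x_\varepsilon|^\gamma \leq M_\varepsilon/(2K)$, that is on the set $B_{r_\varepsilon}(x_\varepsilon) \cap \Omega$ with $r_\varepsilon := (M_\varepsilon/(2K))^{1/\gamma}$.

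The key step is to produce a lower bound of the form $|B_{r_\varepsilon}(x_\varepsilon) \cap \Omega| \geq c\, r_\varepsilon^d$ with $c>0$ independent of $\varepsilon$ (and of $x_\varepsilon$), valid for all sufficiently small $r_\varepsilon$. This is where the Lipschitz boundary hypothesis is used: a uniform interior cone (or corkscrew) condition holds, so even when $x_\varepsilon \in \partial\Omega$, a definite fraction of any small ball centred at $x_\varepsilon$ lies inside $\Omega$. (For $x_\varepsilon$ deep inside $\Omega$ the ball is contained in $\Omega$ as soon as $r_\varepsilon$ is smaller than the distance to the boundary, which is the easier case.) This seems to me the one place where a careful geometric argument, as opposed to routine estimates, is needed, although it is standard.

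Once this is in place, combining the pointwise lower bound with the volume estimate yields
\begin{equation*}
  \|\phi_\varepsilon\|_{L^2(\Omega)}^2 \geq \Big(\frac{M_\varepsilon}{2}\Big)^{\!2} \cdot c\, r_\varepsilon^d
  = c'\, M_\varepsilon^{2+d/\gamma}
\end{equation*}
for some $c'>0$ independent of $\varepsilon$. By hypothesis the left-hand side is $\mathcal{O}(\varepsilon^2)$, so
\begin{equation*}
  M_\varepsilon^{2+d/\gamma} = \mathcal{O}(\varepsilon^2),
  \qquad \text{i.e.} \qquad
  M_\varepsilon = \mathcal{O}\!\left(\varepsilon^{\,2/(2+d/\gamma)}\right) = \mathcal{O}(\varepsilon^{b}).
\end{equation*}

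A small caveat: the argument above uses the volume bound only when $r_\varepsilon$ is smaller than a threshold depending on $\Omega$; if $r_\varepsilon$ exceeds the diameter of $\Omega$ then $M_\varepsilon$ is bounded and the conclusion $M_\varepsilon = \mathcal{O}(\varepsilon^b)$ is trivial (or one simply notes that for $\varepsilon$ small enough this regime does not occur, because $M_\varepsilon \to 0$ by the $L^2$-hypothesis and uniform Hölder equicontinuity via, e.g., Arzelà--Ascoli). I expect no other obstacles: the remaining steps are just book-keeping of constants.
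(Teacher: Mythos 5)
Your argument is correct and is essentially the same as the paper's: both hinge on the observation that uniform H\"older continuity forces $|\phi_\varepsilon|\geq \tfrac{1}{2}\|\phi_\varepsilon\|_{C^0}$ on a ball of radius $\sim\|\phi_\varepsilon\|_{C^0}^{1/\gamma}$ around the maximiser, on the measure-density property $|B_r(x)\cap\Omega|\gtrsim r^d$ of a bounded Lipschitz domain, and on integrating this against the $L^2$ smallness to isolate $\|\phi_\varepsilon\|_{C^0}^{2+d/\gamma}$. The only difference is presentational: the paper frames it as a contradiction argument (assume the bound fails for arbitrarily large $K$, and show the two sides of the resulting inequality scale identically in $\delta$, so a large $K$ yields a contradiction), whereas you run the estimate directly and simply solve for $M_\varepsilon$. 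Your direct version is a bit cleaner and makes the exponent bookkeeping $2b+bd/\gamma=2$ transparent; your caveat about the range of $r_\varepsilon$ is handled correctly, and in fact if $r_\varepsilon>\mathrm{diam}\,\Omega$ one gets the even stronger bound $M_\varepsilon=\mathcal{O}(\varepsilon)$ directly, so no case is lost.
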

\begin{proof}
 If this is not the case, then for any $K,\varepsilon>0$ there exists $\delta\in(0,\varepsilon)$ and $x_\delta\in\Omega$ such that $|\phi_\delta(x_\delta)|>K\delta^b$. Since there exists $\delta$-independent constant $C>0$ such that for all $x,y\in\Omega$, $|\phi_\delta(x)-\phi_\delta(y)|\leq C|x-y|^\gamma$, it is not difficult to see that for all 
$$
  y\in M_\delta:=\left\{y\in\Omega : \ 
  C|x_\delta-y|^\gamma<\frac{K}{2}\delta^b\right\},
$$
 $|\phi_\delta(y)|>\frac{K}{2}\delta^b$. This together with $\|\phi_\varepsilon\|_{L^2(\Omega)}=\mathcal{O}(\varepsilon)$ implies 
\begin{equation} \label{eq:vol_bound}
 \tilde{K}\delta^2>\|\phi_\delta\|_{L^2(\Omega)}^2\geq\|\phi_\delta\|_{L^2(M_\delta)}^2>\Big(\frac{K}{2}\Big)^2\delta^{2b}|M_\delta|,
\end{equation} 
 where the first inequality holds true with some fixed $\tilde{K}>0$ and for all $\delta$ small enough. Now, we have
 $$|M_\delta|=\left|B\Big(x_\delta,\Big(\frac{K}{2C}\delta^b\Big)^{\frac{1}{\gamma}}\Big)\cap\Omega\right|>LK^\frac{d}{\gamma}\delta^{\frac{b d}{\gamma}}$$
 with some $\delta$-independent $L>0$ whose existence is guaranteed by the assumption that $\Omega$ is bounded and with Lipschitz boundary.
 Therefore, the both sides of \eqref{eq:vol_bound} decay as $\delta^2$ when $\delta\to 0$. But  $K$ might have been chosen arbitrarily large.  Hence, for a sufficiently small $\varepsilon$, 
 we arrive at a contradiction with \eqref{eq:vol_bound}.
\end{proof}
Since by Corollary \ref{cor:Sobolev2}, the H\"older coefficients of all the second derivatives of $\psi$ are bounded, and in Section \ref{sec:W2} we proved that $\|D^2\psi\|=\mathcal{O}(\varepsilon)$, we arrive at
the following consequence of Lemma~\ref{lem:ref}:
\begin{proposition} \label{prop:C2_conv}
 For any $a<\frac{1}{2}$, there exists a positive $\varepsilon$-independent constant $K_a$ such that
 \begin{equation*}
 \|\psi\|_{C^2(\overline\domain)}=\|\psi_n^\varepsilon-\psi_n^0\|_{C^2(\overline\domain)}\leq K_a\varepsilon^a.
 \end{equation*}
\end{proposition}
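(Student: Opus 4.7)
The plan is to apply Lemma~\ref{lem:ref} pointwise to each second partial derivative of $\psi$, viewed as an $\varepsilon$-indexed family on the two-dimensional domain~$\domain$, and then to combine the resulting rate with the already-available $\mathcal{O}(\varepsilon)$ bound for $\psi$ and its first derivatives in $C^0$.

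First I would verify the hypotheses of Lemma~\ref{lem:ref} for each of the three second partial derivatives $\partial_s^2\psi$, $\partial_s\partial_t\psi$, $\partial_t^2\psi$. Corollary~\ref{cor:Sobolev2} provides a uniform (in $\varepsilon$) bound on the $C^{0,\gamma}(\overline{\domain})$-norm of these functions for any $\gamma\in(0,1)$, so in particular they form a bounded set in $C^{0,\gamma}$. Section~\ref{sec:W2} establishes that $\|\partial_s^2\psi\|=\mathcal{O}(\varepsilon)$, $\|\partial_s\partial_t\psi\|=\mathcal{O}(\varepsilon^2)$ and $\|\partial_t^2\psi\|=\mathcal{O}(\varepsilon^3)$, which a fortiori gives $L^2$-norms of order $\mathcal{O}(\varepsilon)$ for each of them. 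Since $\domain\subset\R^2$ is bounded with Lipschitz boundary, the lemma applies with $d=2$.

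Applying Lemma~\ref{lem:ref} then yields
\begin{equation*}
  \|\partial^\alpha\psi\|_{C^0(\overline{\domain})}=\mathcal{O}(\varepsilon^b),\qquad b=\frac{2}{2+\tfrac{2}{\gamma}}=\frac{\gamma}{\gamma+1},
\end{equation*}
for every multi-index $\alpha$ with $|\alpha|=2$ and every $\gamma\in(0,1)$. Given $a<\tfrac{1}{2}$, I would choose $\gamma\in(0,1)$ so that $\tfrac{\gamma}{\gamma+1}\geq a$, which is possible because $a<\tfrac{1}{2}$ forces $\tfrac{a}{1-a}<1$, so any $\gamma\in[\tfrac{a}{1-a},1)$ works. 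For the lower-order terms, Corollary~\ref{cor:Holder} already gives $\|\psi\|_{C^1(\overline{\domain})}=\mathcal{O}(\varepsilon)\subset\mathcal{O}(\varepsilon^a)$. Summing the $C^0$-bounds for $\psi$, its first derivatives, and its second derivatives produces the desired inequality $\|\psi\|_{C^2(\overline{\domain})}\leq K_a\varepsilon^a$ with an $\varepsilon$-independent constant $K_a$.

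I do not anticipate any genuine obstacle: once Lemma~\ref{lem:ref} is in hand and the $L^2$-smallness together with $C^{0,\gamma}$-boundedness of the second derivatives has been established in the preceding subsections, the proposition follows by a one-line interpolation argument, with the only subtlety being the explicit computation of the exponent $b=\gamma/(\gamma+1)$ in dimension two and the observation that it can be brought arbitrarily close to $\tfrac{1}{2}$ but not equal to it, which is exactly why the bound is stated for $a<\tfrac{1}{2}$ rather than $a=\tfrac{1}{2}$.
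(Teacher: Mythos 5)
Your argument is correct and follows exactly the route the paper takes: the paper also obtains Proposition~\ref{prop:C2_conv} by feeding the $C^{0,\gamma}$-boundedness of the second derivatives (Corollary~\ref{cor:Sobolev2}) and the $L^2$-smallness $\|D^2\psi\|=\mathcal{O}(\varepsilon)$ (Section~\ref{sec:W2}) into Lemma~\ref{lem:ref} with $d=2$, and your computation of the exponent $b=\gamma/(\gamma+1)$ together with the choice $\gamma\geq a/(1-a)$ is precisely the implicit step the paper leaves to the reader.
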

\begin{remark} \label{rem:ref}
Putting Proposition \ref{prop:C2_conv} and Corollary \ref{cor:Sobolev2} together we obtain
\begin{equation*}
 \|\psi\|_{C^{2,\tilde\gamma}(\overline\domain)}=\mathcal{O}\big(\varepsilon^{(1-\tilde\gamma)a}\big)
\end{equation*}
with any $\tilde\gamma\in(0,1)$ and $a<\frac{1}{2}$. This can be easily seen after applying the following estimate on the second derivatives of $\psi$,
\begin{equation*}
\frac{|\phi(x)-\phi(y)|}{|x-y|^{\tilde\gamma}}\leq\left(\frac{|\phi(x)-\phi(y)|}{|x-y|^{\gamma}}\right)^{\frac{\tilde\gamma}{\gamma}}(2\|\phi\|_{C^{0}(\overline\domain)})^{1-\frac{\tilde\gamma}{\gamma}},
\end{equation*}
where $0<\tilde\gamma<\gamma\in(0,1)$ and $\phi\in C^{0,\gamma}(\overline\domain)$.
\end{remark}

\section{Convergence of extremal points}\label{Sec.critical}
%
In this section we particularly prove Theorem~\ref{Thm.location}. 

Since, under the smallness assumption~\eqref{rearrange},
we explicitly have
$$
  \psi_n^0(s,t)=\sqrt{\frac{2}{L}} \, \cos{\frac{(n-1)\pi s}{L}},
$$
the global extrema of $\psi_n^0$ on $\overline\domain$ are attained at 
$\{s_m^{(n)}\}_{m=0}^{n-1}\times I$, 
where the points $s_m^{(n)}$ were introduced in \eqref{eq:stac_points} 
and now we abbreviate $I:=(-1,1)$.
More concretely, along $\{0\}\times I$ there are always global maxima; 
along $\{L\}\times I$ there are global maxima if $n$ is odd 
and global minima if $n$ is even; 
along $\{s_m^{(n)}\}\times I,\, m=1,2,\ldots, n-2$, 
there are local minima if $m$ is odd 
and local maxima if $m$ is even. 
All these local extrema are global at same moment and non-strict.

\begin{lemma} \label{lem:extrema}
For any  $N \in \{2,3,\ldots\}$ and $\delta>0$, 
there exists a positive constant~$\eps_1$ 
depending on~$\delta$, $N$, $\Gamma$, and~$\mathcal{A}$ such that, 
for all $\eps < \eps_1$ and any $n \in \{2,\dots,N\}$,
\begin{alignat}{2}
\sup_{S_m^{(n)}(\delta)}\psi_n^\eps
&>\sup_{\domain\setminus\bigcup\limits_{\substack{j=0\\ j\text{ even}}}^{n-1} S_j^{(n)}(\delta)}\psi_n^\eps \qquad 
&&\text{ if }m\text{ is even},  \label{eq:max}
\\ \rule{0ex}{5ex} 
\inf_{S_m^{(n)}(\delta)}\psi_n^\eps
&<\inf_{\domain\setminus\bigcup\limits_{\substack{j=0\\ j\text{ odd}}}^{n-1} S_j^{(n)}(\delta)}\psi_n^\eps \qquad 
&&\text{ if }m\text{ is odd}.  \label{eq:min}
\end{alignat}
\end{lemma}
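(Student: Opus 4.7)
The plan is to deduce Lemma~\ref{lem:extrema} from the uniform convergence $\|\psi_n^\eps - \psi_n^0\|_{L^\infty(\domain)} \to 0$ supplied by Proposition~\ref{prop:C2_conv} (only the $C^0$-part is needed), combined with a quantitative gap in the explicit limit profile
\begin{equation*}
  \psi_n^0(s,t)=\sqrt{2/L}\,\cos\!\big((n-1)\pi s/L\big).
\end{equation*}
Since $\psi_n^0$ is independent of $t$ and attains its global maximum $\sqrt{2/L}$ precisely at $s=s_m^{(n)}$ with $m$ even and its global minimum $-\sqrt{2/L}$ precisely at $s=s_m^{(n)}$ with $m$ odd, continuity together with compactness yields
\begin{equation*}
  \eta_n(\delta):=\sqrt{2/L}-\max\Big\{\psi_n^0(s,t):\,(s,t)\in\overline{\domain}\setminus\!\!\bigcup_{\substack{j=0\\ j\text{ even}}}^{n-1}\!S_j^{(n)}(\delta)\Big\}>0,
\end{equation*}
and an analogous $\eta_n'(\delta)>0$ after swapping the roles of maxima and minima.

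Set $\eta:=\min_{2\leq n\leq N}\min\{\eta_n(\delta),\eta_n'(\delta)\}>0$ and choose $\eps_1>0$ satisfying~\eqref{rearrange} and small enough that $\|\psi_n^\eps-\psi_n^0\|_{L^\infty(\domain)}<\eta/3$ for every $\eps<\eps_1$ and every $n\in\{2,\ldots,N\}$; such an $\eps_1$ exists by Proposition~\ref{prop:C2_conv} and the finiteness of $N$. For even $m$, the inclusion $(s_m^{(n)},0)\in\overline{S_m^{(n)}(\delta)}$ together with continuity of $\psi_n^\eps$ up to the boundary gives
\begin{equation*}
  \sup_{S_m^{(n)}(\delta)}\psi_n^\eps\;\geq\;\psi_n^\eps\big(s_m^{(n)},0\big)\;\geq\;\sqrt{2/L}-\tfrac{\eta}{3},
\end{equation*}
whereas on the complement
\begin{equation*}
  \sup_{\domain\setminus\bigcup\limits_{\substack{j=0\\ j\text{ even}}}^{n-1}S_j^{(n)}(\delta)}\psi_n^\eps\;\leq\;\sqrt{2/L}-\eta+\tfrac{\eta}{3}\;=\;\sqrt{2/L}-\tfrac{2\eta}{3},
\end{equation*}
and subtracting yields~\eqref{eq:max}. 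The inequality~\eqref{eq:min} follows \emph{mutatis mutandis} from the odd extrema and $\eta_n'(\delta)$.

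The arithmetic of the sandwich is routine; all the substantive content is already encoded in the convergence $\|\psi_n^\eps-\psi_n^0\|_{L^\infty(\domain)}=\mathcal{O}(\eps^a)$ together with the elementary observation that $\psi_n^0$ is bounded away from its extremal values outside every $\delta$-neighbourhood of its extremal lines. The one point requiring a brief word of care is the treatment of the boundary indices $m\in\{0,n-1\}$: then $(s_m^{(n)},0)\in\partial\domain$, but since $\psi_n^\eps\in C^2(\overline{\domain})$ the value at this point is defined and $\{s_m^{(n)}\}\times[-1,1]\subset\overline{S_m^{(n)}(\delta)}$, so the lower bound on the supremum in $S_m^{(n)}(\delta)$ is still available. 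Consequently there is no genuine obstacle beyond invoking the already-proved $C^0$-convergence with a quantitative rate.
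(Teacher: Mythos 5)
Your proof is correct and follows essentially the same route as the paper: both arguments combine a uniform ($L^\infty$) bound on $\psi_n^\eps-\psi_n^0$ with the quantitative gap that the explicit profile $\psi_n^0$ exhibits between its global extremum and its values outside the $\delta$-neighbourhoods of the extremal lines, then conclude by a sandwich inequality. The only cosmetic difference is that you invoke the $C^0$-part of Proposition~\ref{prop:C2_conv} (rate $\mathcal{O}(\eps^a)$) where the paper uses the stronger and earlier bound~\eqref{eq:unif_conv} from Corollary~\ref{cor:Holder} (rate $\mathcal{O}(\eps)$), and you package the gap abstractly via compactness as $\eta>0$ while the paper writes it out explicitly as $\sqrt{2/L}\,\bigl(1-\cos(\pi\delta/L)\bigr)$; neither difference is substantive.
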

\begin{proof}
Take 
$$
  \delta\in\left(0,\frac{L}{2(n-1)}\right).
$$ 
(This is not restrictive at all, because for larger values of $\delta$ we may use $\eps_1$ that corresponds to any $\delta$ from the interval.) Let $m$ be even. Then, due to \eqref{eq:unif_conv}, there exists a positive constant $M$  such that for all $t\in\bar{I}$ and all $\eps$ sufficiently small,
\begin{equation*}
 \psi_n^\eps(s_m^{(n)},t)>\sqrt{\frac{2}{L}}-M\eps.
\end{equation*}
On the other hand, for all $\eps$ small enough, 
\begin{equation*}
 \psi_n^\eps<\sqrt{\frac{2}{L}}\cos\frac{(n-1)\pi\delta}{L}+M\eps 
\qquad \text{on} \qquad \domain\setminus\bigcup\limits_{\substack{j=0\\ j\text{ even}}}^{n-1} S_j^{(n)}(\delta).
\end{equation*}
Therefore,  \eqref{eq:max} is valid, whenever
\begin{equation*}
 2M\eps< \sqrt{\frac{2}{L}}\Big(1-\cos\frac{\pi\delta}{L}\Big).
\end{equation*}
Inequality \eqref{eq:min} may be verified in a similar manner.
\end{proof}

\begin{lemma} \label{lem:no_stac_1}
For any  $N \in \{2,3,\ldots\}$ and $\delta>0$, 
there exists a positive constant~$\eps_2$ 
depending on~$\delta$, $N$, $\Gamma$, and~$\mathcal{A}$ such that, 
for all $\eps < \eps_2$ and any $n \in \{2,\dots,N\}$, 
$$
\mbox{$\psi_n^\eps$ has no stationary point in 
$\displaystyle \domain\setminus\bigcup_{m=0}^{n-1}S_m^{(n)}(\delta)$.}
$$

\end{lemma}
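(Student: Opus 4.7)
The plan is to exploit that the unperturbed eigenfunction $\psi_n^0$ has a nonvanishing $s$-derivative outside the narrow strips $S_m^{(n)}(\delta)$, together with the uniform convergence of $\partial_s \psi_n^\eps$ to $\partial_s\psi_n^0$ supplied by Corollary~\ref{cor:Holder}.

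First I would exploit the explicit form of $\psi_n^0$. Since
\[
  \partial_s \psi_n^0(s,t)
  = -\sqrt{\tfrac{2}{L}}\,\tfrac{(n-1)\pi}{L}\,\sin\tfrac{(n-1)\pi s}{L},
\]
and since the zeros of this function are exactly the points $\{s_m^{(n)}\}_{m=0}^{n-1}$, continuity and compactness yield a positive constant $c_{\delta,n}>0$, depending only on $\delta$, $n$ and $L$, such that
\[
  \bigl|\partial_s \psi_n^0(s,t)\bigr| \geq c_{\delta,n}
  \qquad\text{for all }(s,t)\in\overline{\domain}
  \setminus \bigcup_{m=0}^{n-1} S_m^{(n)}(\delta).
\]
Here I am using that in the complement of this union, $s$ is separated from every $s_m^{(n)}$ by a distance of at least $\delta$, so $|\sin((n-1)\pi s/L)|$ admits a uniform positive lower bound.

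Next I would invoke Corollary~\ref{cor:Holder}, which gives $\|\partial_s\psi_n^\eps-\partial_s\psi_n^0\|_{L^\infty(\domain)} = \mathcal{O}(\eps)$, uniformly for $n\in\{2,\dots,N\}$ (note that the implicit constant depends on $N$ since the $W^{3,2}$-convergence constants do). Consequently, for any $(s,t)$ in the complement of $\bigcup_m S_m^{(n)}(\delta)$ and all sufficiently small $\eps$,
\[
  \bigl|\partial_s \psi_n^\eps(s,t)\bigr|
  \geq \bigl|\partial_s \psi_n^0(s,t)\bigr|
  - \bigl|\partial_s(\psi_n^\eps-\psi_n^0)(s,t)\bigr|
  \geq c_{\delta,n} - M_n \eps,
\]
where $M_n$ is $\eps$-independent. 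Choosing $\eps_2>0$ so small that $M_n\eps_2 < c_{\delta,n}/2$ for every $n\in\{2,\dots,N\}$ (only finitely many $n$ are involved, so a common $\eps_2$ exists), we obtain $\partial_s\psi_n^\eps \neq 0$ throughout $\domain\setminus\bigcup_m S_m^{(n)}(\delta)$, so in particular $\nabla \psi_n^\eps \neq 0$ there, proving the lemma.

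The argument is essentially routine once the $C^1$-convergence of Section~\ref{Sec.efs} is in hand; the only care needed is that the lower bound $c_{\delta,n}$ and the error constant $M_n$ both depend on $n$, so one takes the worst constants over the finite range $n\in\{2,\dots,N\}$ to obtain a single $\eps_2 = \eps_2(\delta,N,\Gamma,\mathcal{A})$. No compensation is required near the boundary $t=\pm1$ because the chosen derivative $\partial_s$ is tangential to those pieces of $\partial\Omega_\eps$, so the bound applies uniformly up to the boundary.
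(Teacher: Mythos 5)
Your argument is correct and essentially identical to the paper's own proof: the paper too bounds $|\partial_s\psi_n^0|$ from below away from the zeros of $\sin((n-1)\pi s/L)$ and then uses the uniform $C^1$-estimate from Corollary~\ref{cor:Holder} to transfer the positivity of $\partial_s\psi_n^\eps$ to the complement of the strips, taking a worst-case $\eps_2$ over the finite range $n\in\{2,\dots,N\}$. The only cosmetic difference is that the paper writes an explicit lower bound $\frac{\sqrt{2}(n-1)\pi}{L^{3/2}}\sin\frac{(n-1)\pi\delta}{L}$ (after first restricting to $\delta<L/(2(n-1))$, which is harmless) where you invoke compactness for the existence of $c_{\delta,n}$.
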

\begin{proof}
Take 
$$
  \delta\in\left(0,\frac{L}{2(n-1)}\right).
$$ 
(For the same reason as in the proof of Lemma \ref{lem:extrema}, this is actually no restriction.)
Using \eqref{eq:unif_conv} we infer that, 
for all 
$
  (s,t)\in \domain\setminus\bigcup_{m=0}^{n-1}S_m^{(n)}(\delta),
$
$$
  |\partial_s\psi_n^\eps(s,t)|
  \geq \frac{\sqrt{2}(n-1)\pi}{L^{3/2}} \sin{\frac{(n-1)\pi\delta}{L}}
  -M\eps
  \geq\frac{2^{3/2}\pi(n-1)^2}{L^{5/2}}\delta-M\eps,
$$
where $M$ is a positive constant 
and $\eps$ is small enough. 
We see that for all~$\eps$ sufficiently small,
\begin{equation} \label{eq:der_est}
 |\partial_s\psi_n^\eps|>0
  \qquad \text{on} \qquad
  \domain\setminus \bigcup_{m=0}^{n-1}S_m^{(n)}(\delta).
\end{equation}
\end{proof}
\begin{lemma} \label{lem:no_stac_2}
For any  $N \in \{2,3,\ldots\}$ and $\delta\in\left(0, \frac{L}{4(N-1)}\right)$
there exists a positive constant~$\eps_3$  
depending on~$N$, $\Gamma$, and~$\mathcal{A}$ such that, 
for all $\eps < \eps_3$ and any $n \in \{2,\dots,N\}$, 
$$
\mbox{$\psi_n^\eps$ has no stationary point in 
$\displaystyle S_0^{(n)}(\delta)\cup S_{n-1}^{(n)}(\delta)$.}
$$
\end{lemma}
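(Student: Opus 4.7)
The plan is to combine the Neumann boundary condition at the short ends $s=0$ and $s=L$ with the $C^2$-convergence of Proposition~\ref{prop:C2_conv} in order to pin down the sign of $\partial_s\psi_n^\eps$ throughout $S_0^{(n)}(\delta)\cup S_{n-1}^{(n)}(\delta)$. Because the outward unit normal on $\{0\}\times(-1,1)$ (respectively, $\{L\}\times(-1,1)$) is $-\partial_s$ (respectively, $\partial_s$), the Neumann condition forces
\[
  \partial_s\psi_n^\eps(0,t)=\partial_s\psi_n^\eps(L,t)=0
  \qquad \forall t\in(-1,1).
\]
Hence the absence of stationary points of $\psi_n^\eps$ in the open rectangles $S_0^{(n)}(\delta)$ and $S_{n-1}^{(n)}(\delta)$ will follow as soon as one shows that $\partial_s^2\psi_n^\eps$ is of definite sign (uniformly in $t$) on each of these sets.

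The definite sign of $\partial_s^2\psi_n^\eps$ will be inherited from $\psi_n^0$. Indeed,
\[
  \partial_s^2\psi_n^0(s,t)
  =-\Big(\frac{(n-1)\pi}{L}\Big)^2\sqrt{\frac{2}{L}}\,
  \cos\frac{(n-1)\pi s}{L},
\]
which, under the assumption $\delta<L/(4(N-1))$, satisfies $(n-1)\pi s/L<\pi/4$ for every $s\in[0,\delta]$ and $n\le N$. Therefore there exists a positive constant $c_N$, depending only on $N$ and $L$, such that $\partial_s^2\psi_n^0(s,t)\le-c_N$ on $\overline{S_0^{(n)}(\delta)}$ and $\partial_s^2\psi_n^0(s,t)\ge c_N$ on $\overline{S_{n-1}^{(n)}(\delta)}$ when $n$ is even, and with reversed signs when $n$ is odd. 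By Proposition~\ref{prop:C2_conv}, fixing any $a<\tfrac{1}{2}$,
\[
  \|\partial_s^2\psi_n^\eps-\partial_s^2\psi_n^0\|_{C^0(\overline\domain)}
  \le K_a\eps^a,
\]
so choosing $\eps_3$ small enough to make $K_a\eps_3^a<c_N/2$ yields $|\partial_s^2\psi_n^\eps|\ge c_N/2$ with the same sign as $\partial_s^2\psi_n^0$ throughout $\overline{S_0^{(n)}(\delta)}\cup\overline{S_{n-1}^{(n)}(\delta)}$, uniformly in $t$ and in $n\in\{2,\dots,N\}$.

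Finally, integrating along the $s$-direction from the Neumann edge and using the sign information just obtained,
\[
  \partial_s\psi_n^\eps(s,t)
  =\int_0^s\partial_s^2\psi_n^\eps(\sigma,t)\,\dd\sigma
  \qquad\text{on } S_0^{(n)}(\delta),
\]
so that $|\partial_s\psi_n^\eps(s,t)|\ge(c_N/2)\,s>0$ for every $(s,t)\in S_0^{(n)}(\delta)$; the analogous integration from $s=L$ handles $S_{n-1}^{(n)}(\delta)$. In particular $\partial_s\psi_n^\eps$ cannot vanish in either set, which rules out stationary points. The main (and only real) obstacle is ensuring that the convergence of the second derivatives is strong enough to preserve the non-vanishing of $\partial_s^2\psi_n^0$; this is precisely what the $C^2(\overline\domain)$-convergence in Proposition~\ref{prop:C2_conv} provides, so the argument goes through once $\delta$ is chosen in the stated range and $\eps_3$ is taken sufficiently small (uniformly in $n\in\{2,\dots,N\}$).
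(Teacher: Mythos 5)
Your proof is correct and essentially identical to the paper's: both exploit the Neumann condition $\partial_s\psi_n^\eps(0,\cdot)=\partial_s\psi_n^\eps(L,\cdot)=0$ at the short ends, use Proposition~\ref{prop:C2_conv} together with the explicit sign of $\partial_s^2\psi_n^0$ on $S_0^{(n)}(\delta)$ and $S_{n-1}^{(n)}(\delta)$ to get a uniform sign for $\partial_s^2\psi_n^\eps$ there, and then integrate from the edge to conclude $\partial_s\psi_n^\eps$ cannot vanish in the interior (the paper phrases this last step as a contradiction, you state it directly; the two are equivalent). One minor bookkeeping slip: the sign of $\partial_s^2\psi_n^0$ on $\overline{S_0^{(n)}(\delta)}$ is negative for \emph{all} $n$ (since $\cos$ is positive near $s=0$); only the sign on $\overline{S_{n-1}^{(n)}(\delta)}$ flips with the parity of $n$ -- this is immaterial to the argument, which needs only definiteness, not the particular sign.
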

\begin{proof}
 We will prove the claim on $S_0^{(n)}(\delta)$, where we have
 \begin{equation*}
 \partial^2_s\psi_n^0(s,t)=-\sqrt{\frac{2}{L}}\left(\frac{(n-1)\pi}{L}\right)^2 \cos\frac{(n-1)\pi s}{L}<-\sqrt{\frac{1}{L}}\left(\frac{(n-1)\pi}{L}\right)^2.
 \end{equation*}
 Pick $a\in(0,\frac{1}{2})$. Then, by Proposition \ref{prop:C2_conv},  
\begin{align*}
  K_a\varepsilon^a
  &\geq\sup_{\domain}
  |\partial_s^2\psi_n^\varepsilon-\partial_s^2\psi_n^0|
  \\
  &\geq\sup_{S_0^{(n)}(\delta)}(\partial_s^2\psi_n^\varepsilon-\partial_s^2\psi_n^0)
  \\
  &>\sup_{S_0^{(n)}(\delta)}
  \partial_s^2\psi_n^\varepsilon+\sqrt{\frac{1}{L}}\left(\frac{(n-1)\pi}{L}\right)^2.
\end{align*}
 Therefore, for all $\varepsilon$ sufficiently small, $\partial_s^2\psi_n^\varepsilon<0$ in $S_0^{(n)}(\delta)$. Now assume there is a stationary point, say $(s_0,t_0)$, of $\psi_n^\eps$ in $S_0^{(n)}(\delta)$. Then
\begin{equation*}
\int_{0}^{s_0}\partial_{s}^{2}\psi_n^\eps(s,t_0)\dd s=\partial_s\psi_n^\eps(s_0,t_0)-\partial_s\psi_n^\eps(0+,t_0)=0.
\end{equation*}
But the integral on the left-hand side is negative -- a contradiction.
\end{proof}

\subsection*{Proof of Theorem~\ref{Thm.location}}	
Taking $\delta$ from Lemma \ref{lem:no_stac_2} for $\delta$ 
in Lemmata~\ref{lem:extrema} and~\ref{lem:no_stac_1}, 
and identifying $\Omega_\eps$ 
with the Riemannian manifold~$(\domain,G_\eps)$; 
we arrive at  \eqref{eq:location_max}, \eqref{eq:location_min}, and the first statement of the second part of Theorem~\ref{Thm.location} (about stationary points).

Now, for all $s\in(s_m^{(n)}-\delta,s_m^{(n)}+\delta)$ with $\delta<L/(4(n-1))$ and $m=1,2,\ldots,n-2$, we have
\begin{equation*}
  \psi_n^0(s_m^{(n)},t)=(-1)^m \sqrt{\frac{2}{L}},
  \qquad |\psi_n^0(s_m^{(n)},t)-\psi_n^0(s,t)|
  \leq\sqrt{\frac{2}{L}}\left(1-\frac{\sqrt{2}}{2}\right).
 \end{equation*}
Therefore, due to \eqref{eq:unif_conv}, 
$(-1)^m\psi_n^\eps>0$ on $S_m^{(n)}(\delta)$, 
whenever $\eps$ is sufficiently small. 
The eigenfunction equation then yields 
$$
  (-1)^m H_\eps \psi_n^\eps>0
  \qquad \text{on} \qquad
  S_m^{(n)}(\delta).
$$
By the maximum principle, if $m$ is odd then there are no maxima of 
$\psi_n^\eps$ in $S_m^{(n)}(\delta)$  and if~$m$ is even 
then there are no minima of $\psi_n^\eps$ in $S_m^{(n)}(\delta)$. 
This proves the last statement of Theorem~\ref{Thm.location}.%
\hfill\qed%

\begin{remark}\label{Rem.dependence.bis} 
The more general approach suggested in Remark~\ref{Rem.dependence}
enables one to restate the main results of this paper 
in a more uniform way. 
For instance, inspecting the proof of Theorem~\ref{Thm.location},
we may restate Corollary~\ref{Corol.location} as follows.
Given positive constants~$\tilde{C}$ and~$\tilde{L}$,
let us consider the class of Riemannian manifolds $(\domain,G_\eps)$
satisfying the bounds
\begin{equation}\label{eq:strip_ass}
  L > \tilde{L}
  \;\; \& \;\;
  \sup_{(s,u)\in(0,L)\times(-\tilde{\eps},\tilde{\eps})}
  \left(\sum_{i=0}^{3}\left(
  |\kappa^{(i)}(s)| 
  + |\partial_s^i K(s,u)|\right) 
  + |\partial_u K(s,u)|
  \right)
  < \tilde{C}.
\end{equation}
Then there exists a positive constant~$\eps_0$ 
depending only on~$\tilde{C}$, $\tilde{\eps}$ and~$\tilde{L}$
such that, for any $\eps \leq \eps_0$, 
the second Neumann eigenfunction~$\psi_2^\eps$ of~$H_\eps$
achieves its global maximum (respectively, minimum) 
on the boundary points $\tilde{S}_0^{(2)}$
(respectively, $\tilde{S}_1^{(2)}$).
\end{remark}
%


\section*{Acknowledgments}
The research of D.K.\ was partially supported by FCT (Portugal)
through project PTDC/MAT-CAL/4334/2014 and by the Czech Science
Foundation (GA\v{C}R) grant No. 18-08835S.
M.T.\ was supported by the Czech Science
Foundation (GA\v{C}R) grant No. 17-01706S.

We are indebted to the anonymous referee 
for providing insightful comments 
which led to improvement of some of the presented results.



%

\end{document}